\newtheorem{thm}{Theorem}
\newtheorem{lemma}[thm]{Lemma}
\newtheorem{prop}[thm]{Proposition}
\newtheorem{cor}[thm]{Corollary}
\newtheorem{question}[thm]{Question}
\newtheorem{fact}[thm]{Fact}
\newtheorem{conj}[thm]{Conjecture}
\theoremstyle{definition}
\newtheorem{df}[thm]{Definition}
\newtheorem{rmk}[thm]{Remark}
\newtheorem{rmks}[thm]{Remarks}
\theoremstyle{remark}
\def\dotminussym#1#2{%
  \setbox0=\hbox{$\m@th#1-$}%
  \kern.5\wd0%
  \hbox to 0pt{\hss\hbox{$\m@th#1-$}\hss}%
  \raise.6\ht0\hbox to 0pt{\hss$\m@th#1.$\hss}%
  \kern.5\wd0}
\newcommand{\dotminus}{\mathbin{\mathpalette\dotminussym{}}}
\renewcommand{\r}{\mathbb{R}}
\newcommand{\Z}{\mathbb{Z}}
\newcommand{\curly}[1]{\mathcal{#1}}
\newcommand{\B}{\curly{B}}
\renewcommand{\to}{\rightarrow}
\def \<{\langle}
\def \>{\rangle}
\def \*Z {{{^*}\Z}}
\def \((  {(\!(}
\def \)) {)\!)}
\def \ns{\operatorname{ns}}
\def \O{\operatorname{O}}
\numberwithin{equation}{section}
\def \R{\mathcal R}
\def \u{\mathcal U}
\def \O{\mathcal O}
\def \id{\operatorname{id}}
\def\cb{\operatorname{cb}}
\def\om{\omega}
\def \k{\mathcal{SF}}
\def \on{\operatorname{n}}
\def \os{\operatorname{s}}
\def \ons{\operatorname{ns}}
\newcommand{\cstar}{$\mathrm{C}^*$}
\DeclareMathOperator{\ex}{ex}
\def \Q{\mathcal{Q}}
\title{Robinson Forcing and the Quasidiagonality Problem}
\author{Isaac Goldbring and Thomas Sinclair}
\thanks{I. Goldbring was partially supported by NSF CAREER grant DMS-1349399.}
\thanks{T. Sinclair was partially supported by NSF grant DMS-1600857.}
\thanks{The authors thank Bradd Hart, Mikael Rordam, Aaron Tikuisis, and Wilhelm Winter for useful conversations in connection to this work.}
\address{Department of Mathematics, Statistics, and Computer Science\\
University of Illinois at Chicago, Science and Engineering Offices (M/C 249) \\
851 S. Morgan St., Chicago, IL 60607-7045 and Department of Mathematics\\University of California, Irvine, 340 Rowland Hall (Bldg.\# 400),
Irvine, CA 92697-3875}
\email{isaac@math.uci.edu}
\address{Mathematics Department, Purdue University, 150 N. University Street, West Lafayette, IN 47907-2067}
\email{tsincla@purdue.edu}
\urladdr{http://www.math.purdue.edu/~tsincla/}
\subjclass[2010]{46L35; 03C25, 03C98}
\begin{document}
\begin{abstract}
We introduce weakenings of two of the more prominent open problems in the classification of \cstar-algebras, namely the quasidiagonality problem and the UCT problem.  We show that the a positive solution of the conjunction of the two weaker problems implies a positive solution of the original quasidiagonality problem as well as allows us to give a local, finitary criteria for the MF problem, which asks whether every stably finite \cstar-algebra is MF.
\end{abstract}
\maketitle

%
%
%

\section{Introduction}

A prominent open problem in the theory of \cstar-algebras, raised in the work of Blackadar and Kirchberg \cite[section 7]{blackkir}, is whether or not every stably finite nuclear \cstar-algebra is quasidiagonal, what we will refer to as the \emph{quasidiagonality problem}.
Recent spectacular progress on this problem was made in \cite{TWW}, where it was shown that every separable, simple, unital, stably finite, nuclear \cstar-algebra in the so-called UCT class is quasidiagonal.  (UCT stands for ``Universal Coefficient Theorem'' and one can consult \cite{black} for more information on this class.)  Thus, a positive solution to the \emph{UCT problem}, which asks whether every separable nuclear \cstar-algebra is in the UCT class, yields a positive solution to the quasidiagonality problem for simple algebras.  In Section \ref{qdc}, we introduce weaker versions of the UCT problem and the quasidiagonality problem, and we prove that a positive solution to the conjunction of the two weaker problems implies a positive solution to the quasidiagonality problem in full.  The key model-theoretic notions involved in this proof are \emph{existentially closed algebras}, \emph{locally universal algebras}, and \emph{Robinson forcing}.  A brief review of these notions is the content of Section \ref{ecreview}.

Related to the quasidiagonality problem is the \emph{MF problem}, which asks whether every stably finite \cstar-algebra is MF, that is, embeds into an ultrapower of the universal, separable UHF algebra $\Q$.  By the Choi-Effros lifting theorem, a nuclear \cstar-algebra is quasidiagonal if and only if it is MF, whence it is readily seen that the MF problem is in fact a generalization of the quasidiagonality problem.  

In \cite{kep}, the current set of authors considered a problem of a similar nature to the MF problem, namely the \emph{Kirchberg embedding problem}, which asks whether every \cstar-algebra embeds into an ultrapower of the Cuntz algebra $\O_2$.  Using Robinson forcing, the authors were able to give a local, finitary reformulation of the Kirchberg embedding problem, namely that every \cstar-algebra has \emph{good nuclear witnesses} (see Definition \ref{gnw} below or \cite[Definition 3.6]{kep}).  It is reasonable to wonder whether or not a stably finite version of the above local criteria would similarly provide a reformulation of the MF problem.  In Section \ref{qdc}, we prove, once again assuming that both weak problems have positive solutions, that this is indeed the case, namely that the MF problem is equivalent to the problem of whether every stably finite \cstar-algebra has good stably finite nuclear witnesses.  

In \cite{kep}, the authors proved that the only possible nuclear \cstar-algebra that is existentially closed amongst all \cstar-algebras is $\O_2$ (and this is the case if and only if the Kirchberg embedding problem has a positive answer).  It is natural to wonder whether or not the corresponding statement is true in the stably finite situation, namely whether or not $\Q$ is the only possible nuclear stably finite \cstar-algebra that is existentially closed amongst all stably finite \cstar-algebras.  In Section \ref{sqdc}, we make some progress on this question by proving that a simple, nuclear stably finite \cstar-algebra in the UCT class that is existentially closed amongst all stably finite \cstar-algebras is AF.  Central to this result is the study of existentially closed subalgebras of II$_1$ factors, which is an interesting topic in its own right.  We also point out how these techniques also give some insight into the \emph{AF-embedding problem}, which asks whether every stably finite nuclear \cstar-algebra embeds into an AF algebra \cite[Question 7.3.3]{blackkir}.

The \emph{trace problem} asks whether every stably finite \cstar-algebra has a trace.  (We were unable to find a precise name for this problem in the literature so we have chosen to give it this ad hoc name here.)  Since MF algebras have a trace, a positive solution to the MF problem implies a positive solution to the trace problem.  Thus, assuming the aforementioned weak problems have a positive solution, the existence of good stably finite nuclear witnesses implies that every stably finite \cstar-algebra has a trace.  In Section \ref{trace}, by adapting work of Haagerup from \cite{traces}, we show directly, without any further assumptions, that the existence of good stably finite nuclear witnesses implies that every stably finite \cstar-algebra has a trace.  We also show that a quasitracial version of the notion of good stably finite nuclear witnesses yields a sufficient local condition for a positive solution of a question of Kaplansky, namely whether or not every quasitrace on a \cstar-algebra is actually a trace.

In order to keep this note fairly short, we make no attempt to keep the note self-contained.  A nice recent survey giving details on the context of this note is \cite{winter}.  We also assume that the reader has seen model-theoretic methods as applied to \cstar-algebras.  The monograph \cite{munster} has become the canonical reference; one can also consult our earlier paper \cite{kep}.

Let us conclude this section by introducting some notation.  
We let $\k$ denote the class of separable, stably finite, unital \cstar-algebras.  We let $\k_{\on}$ (resp.\ $\k_{\ons}$) denote the subclass of $\k$ consisting of the nuclear (resp.\ simple, nuclear) algebras.  To apply model-theoretic methods, it is important to note that $\k$ consists of the separable models of a universal theory of \cstar-algebras, whilst the classes $\k_{\on}$ and $\k_{\ons}$ are classes definable by uniform families of existential formulae (in the lingo from \cite{munster}).  

We use $\omega$ to denote an arbitrary nonprincipal ultrafilter on $\mathbb{N}$; given a family $(A_n)$ of \cstar-algebras indexed by $\mathbb{N}$, we write $\prod_\om A_n$ for the corresponding ultraproduct and $A^\omega$ in case $A_n=A$ for each $n$. 

Finally, as usual $\otimes$ with no further decoration denotes the minimal tensor product in the category of C$^*$-algebras, operator systems, or operator spaces.

\section{Existentially closed and locally universal algebras}\label{ecreview}

Recall that if $\theta:A\to B$ is a (unital) embedding between (unital) \cstar-algebras, then $\theta$ is said to be \emph{existential} if, for any quantifier-free formula $\phi(x,y)$ in the language of (unital) \cstar-algebras and any tuple $a$ from $A$, we have
$$\inf_{b\in A_1}\phi(a,b)=\inf_{b\in B_1}\phi(\theta(a),b).$$  If $A$ is a subalgebra of $B$ and the inclusion map is existential, we say that $A$ is \emph{existentially closed in} $B$.  If $A$ is existentially closed in all extensions belonging to a particular class of \cstar-algebras (e.g., the class of all stably finite \cstar-algebras), then we say that $A$ is existentially closed for that class.  

Before continuing further, we offer a perspective on existential embeddings which may seem more familiar to readers with a background in classification theory. Let $\theta: A\to B$ be an embedding of C$^*$-algebras. We say that $\theta$ is \emph{approximately split injective} if there exists a directed set $I$, an ultrafilter $\u$ on $I$, and an embedding $\sigma: B\to A^\u$ such that $\sigma\circ\theta: A\to A^\u$ equals the diagonal embedding of $A$ into $A^\u$. Note that if $B$ is separable, then one may take $\u$ to be a nonprincipal ultrafilter on the natural numbers. This notion was isolated and systematically studied in the context of C$^*$-algebras by Barlak and Szabo \cite{szabo} (not quite in the formulation given and with a different name), though it implicitly appears in the work of Gardella \cite{gardella} and the authors \cite{kep}. Similar notions appear even earlier, e.g., \cite[Definition 7.1.5]{blackkir}. It turns out that a $\ast$-embedding $\theta: A\to B$ is approximately split injective \emph{if and only if} $\theta$ is positively existential \cite[p.\ 170]{kep}. (See \cite[Theorem 4.19]{szabo} for one direction: the full equivalence in the category of operator systems is implicit in \cite[section 2.4]{kep} and \cite[section 5]{omit}.) Being positively existential is strictly weaker than being existential. However, one can easily check that all formulae referred to in this note are positive, whence the reader is free to replace ``existential'' with ``approximately split injective'' with little loss of essential meaning.  Note also that when we say that a map between \cstar-algebras is positively existential, we always mean this in the model-theoretic sense (as opposed to the map being existential and also a positive map between \cstar-algebras). 

The key tool used throughout this paper is the construction of algebras using \emph{Robinson forcing}.  The details behind Robinson forcing (especially in the continuous case) can be quite cumbersome and we refer the interested reader to \cite{forcing}, \cite{farahmagidor}, and \cite[Appendix A]{kep} for complete treatments.  The main idea behind Robinson forcing is that one constructs separable algebras by specifying the operator norms of *-polynomial combinations of a distinguished countable dense set in a Baire category style fashion that allows one to construct these algebras ``slowly.''  For particularly nice classes, we can ensure that the ``generic'' algebra thus constructed has various desirable properties (e.g. nuclearity, simplicity, etc...) and is also existentially closed.

Suppose that $\mathcal{K}$ is a class of separable, unital \cstar-algebras.  We say that $A\in \mathcal{K}$ is \emph{locally universal for $\mathcal{K}$} if every element of $\mathcal{K}$ unitally embeds into an ultrapower of $A$.  Thus, the QD problem (resp. MF problem) asks whether or not $\Q$ is locally universal for $\k_{\on}$ (resp. $\k$).  

The following lemma is well-known and straightforward.  It is also implicitly used in \cite[Section 6.5]{munster}.

\begin{lemma}\label{jep}
Suppose that $A\in \mathcal{K}$ is existentially closed for $\mathcal{K}$.  Then the following are equivalent;
\begin{enumerate}
\item $A$ is locally universal for $\mathcal{K}$;
\item $\mathcal{K}$ has the \emph{joint embedding property}:  for every $B,C\in \mathcal{K}$, there is $D\in \mathcal{K}$ and unital embeddings of $B$ and $C$ into $D$.
\item for every $B\in \mathcal{K}$, there is $D\in \mathcal{K}$ and unital embeddings of $A$ and $B$ into $D$.
\end{enumerate}
\end{lemma}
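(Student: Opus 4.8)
The plan is to prove a cycle of implications $(1) \Rightarrow (2) \Rightarrow (3) \Rightarrow (1)$. The implications $(1) \Rightarrow (2)$ and $(2) \Rightarrow (3)$ are essentially formal and make no use of the hypothesis that $A$ is existentially closed; the content is entirely in $(3) \Rightarrow (1)$, where existential closedness is used crucially.

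First I would establish $(1) \Rightarrow (2)$. Assume $A$ is locally universal for $\mathcal K$, so every member of $\mathcal K$ unitally embeds into an ultrapower $A^\omega$. Given $B, C \in \mathcal K$, both embed into $A^\omega$ for a suitable nonprincipal ultrafilter $\omega$ on $\mathbb N$ (using separability). The obstacle here is only that $A^\omega$ itself need not lie in $\mathcal K$. To repair this, I would either argue that $\mathcal K$ is closed under the relevant ultraproducts (since the classes in this paper are axiomatized by universal or uniform existential conditions, ultrapowers of members remain in the class up to the separability requirement), or pass to a separable elementary substructure of $A^\omega$ containing the images of $B$ and $C$, which lies in $\mathcal K$ by the Löwenheim–Skolem theorem for continuous logic. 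Either way one obtains the desired $D$ with unital embeddings of $B$ and $C$. The implication $(2) \Rightarrow (3)$ is immediate: $(3)$ is simply the special case of $(2)$ in which one of the two algebras is taken to be $A$ itself (noting $A \in \mathcal K$).

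\textbf{The crux is $(3) \Rightarrow (1)$}, and this is where I expect the main difficulty to lie. Assume $(3)$ and let $B \in \mathcal K$ be arbitrary; I must produce a unital embedding of $B$ into some ultrapower of $A$. By $(3)$, there is $D \in \mathcal K$ together with unital embeddings $\iota_A : A \to D$ and $\iota_B : B \to D$. Now I invoke the hypothesis that $A$ is existentially closed for $\mathcal K$: since $A \hookrightarrow D$ is an inclusion into a member of $\mathcal K$, the inclusion $\iota_A$ is an existential (indeed positively existential) embedding. The standard model-theoretic characterization of existential embeddings then yields, for any separable $D$, a (nonprincipal, on $\mathbb N$) ultrafilter $\omega$ and an embedding $\sigma : D \to A^\omega$ such that $\sigma \circ \iota_A$ is the diagonal embedding $A \to A^\omega$. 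This is precisely the ``approximately split injective'' reformulation recalled in Section \ref{ecreview}. Composing, $\sigma \circ \iota_B : B \to A^\omega$ is a unital embedding of $B$ into an ultrapower of $A$, which is exactly what local universality demands.

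The one technical point to verify carefully is that the existence of the retracting embedding $\sigma : D \to A^\omega$ follows from $A$ being existentially closed \emph{in} $D$, rather than merely existentially closed for the whole class. Since $D \in \mathcal K$ and $A$ is existentially closed for $\mathcal K$, the inclusion $A \hookrightarrow D$ is existential by definition, so the equivalence between existential embeddings and approximately split injective embeddings applies directly. I would also remark that separability of $D$ (guaranteed by $D \in \mathcal K$) is what permits taking $\omega$ to be an ultrafilter on $\mathbb N$, as noted in the discussion of approximate split injectivity in Section \ref{ecreview}.
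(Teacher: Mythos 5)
The paper gives no proof of this lemma at all --- it is dismissed as ``well-known and straightforward'' --- so there is nothing to compare against line by line; your argument is the standard one and is correct. The only step needing real care is the one you flag in $(1)\Rightarrow(2)$: $A^\omega$ is not separable, and for the classes $\k_{\on}$ and $\k_{\ons}$ one cannot simply take the \cstar-subalgebra of $A^\omega$ generated by the images of $B$ and $C$ (nuclearity and simplicity do not pass to subalgebras), so your alternative of taking a separable elementary substructure is the right move, since the defining conditions for these classes (universal axioms, resp.\ uniform families of existential formulae) are preserved under elementary substructures of $A^\omega$. Your treatment of the crux $(3)\Rightarrow(1)$ --- that existential closedness of $A$ in $D\in\mathcal{K}$ yields $\sigma:D\to A^\omega$ retracting the diagonal, whence $\sigma\circ\iota_B$ embeds $B$ into $A^\omega$ --- is exactly the intended use of the hypothesis and matches the approximately-split-injective characterization recalled in Section \ref{ecreview}.
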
 

One can apply the previous lemma to the case $\mathcal{K}=\k_{\ons}$.  Indeed, $\k_{\ons}$ has an existentially closed element (see \cite{munster}).  Moreover, the tensor product of two elements of $\k_{\ons}$ belongs to $\k_{\ons}$ again by a result of Haagerup \cite{traces}.  Consequently, $\k_{\ons}$ has a locally universal element.

However, the situation with the class $\k$ (or $\k_{\on}$) seems to be somewhat more nebulous.  Indeed, whilst $\k$ also admits an existentially closed element, the following question seems to be open:

\begin{question}\label{jepsf}
Does the class $\k$ (or $\k_n$) have the joint embedding property?
\end{question}

Thus, at the moment, it is unclear whether or not there is any locally universal stably finite (nuclear) \cstar-algebra!

One approach to Question \ref{jepsf} might be to show that the minimal tensor product of two elements of $\k$ is once again an element of $\k$.  However, in the case both are simple this statement is equivalent to a positive solution to the trace problem again by a result of Haagerup (Fact \ref{hasatrace} below).  In fact, the following weaker question still seems to be open:

\begin{question}\label{tensornuclear}
Suppose that $A\in \k_{\on}$ and $B\in \k$.  Is $A\otimes B\in \k$?  
\end{question}  

For our purposes below, the following related lemma will prove useful. 


\begin{lemma}\label{tensorMF}
Suppose that $A,B\in \k$ and $A$ is QD.  Then $A\otimes B$ is stably finite.  
\end{lemma}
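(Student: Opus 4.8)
The plan is to realize $A\otimes B$ as a C$^*$-subalgebra of an ultraproduct of matrix amplifications $M_{k_n}(B)$, each of which is manifestly stably finite because $B$ is, and then to invoke two elementary closure facts: a C$^*$-subalgebra of a stably finite algebra is stably finite, and an ultraproduct of stably finite algebras is again stably finite. The second I would prove directly: if $v=(v_n)_\om$ satisfies $v^*v=1$ in $\prod_\om C_n$ with each $C_n$ finite, then $\|v_n^*v_n-1\|\to 0$ along $\om$, so $v_n^*v_n$ is invertible for $\om$-most $n$ and $u_n:=v_n(v_n^*v_n)^{-1/2}$ is an isometry in $C_n$, hence a unitary by finiteness; since $\|v_n-u_n\|\le\|v_n^*v_n-1\|$, we get $v=(u_n)_\om$ is unitary, so $vv^*=1$. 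Applying this to amplifications via $M_m\big(\prod_\om C_n\big)\cong\prod_\om M_m(C_n)$ gives the stable version.

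To build the embedding, I would use that $A$ QD provides a sequence of unital completely positive maps $\phi_n\colon A\to M_{k_n}(\bC)$ that are asymptotically multiplicative and asymptotically isometric (Voiculescu). Their ultraproduct $\pi:=(\phi_n)_\om$ is then a unital, isometric $\ast$-homomorphism $A\hookrightarrow\prod_\om M_{k_n}(\bC)$. Since the minimal tensor product carries injective $\ast$-homomorphisms to injective $\ast$-homomorphisms, $\pi\otimes\id_B$ is an embedding $A\otimes B\hookrightarrow\big(\prod_\om M_{k_n}(\bC)\big)\otimes B$. Composing with the canonical $\ast$-homomorphism $\rho\colon\big(\prod_\om M_{k_n}(\bC)\big)\otimes B\to\prod_\om\big(M_{k_n}(\bC)\otimes B\big)=\prod_\om M_{k_n}(B)$ yields a map $\Phi\colon A\otimes B\to\prod_\om M_{k_n}(B)$ into a stably finite algebra, and it remains only to check that $\Phi$ is injective.

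The one genuinely technical point, and the step I expect to be the main obstacle, is the injectivity of $\rho$ (equivalently, that $\Phi$ is isometric): one must know that the minimal tensor product embeds isometrically into the ultraproduct of tensor products, $\big(\prod_\om C_n\big)\otimes B\hookrightarrow\prod_\om(C_n\otimes B)$. I would prove this spatially. Fix faithful representations $C_n\subseteq B(H_n)$ and $B\subseteq B(H)$; then $\prod_\om C_n$ acts faithfully on the ultraproduct Hilbert space $H_\om=\prod_\om H_n$, and there is a canonical isometric inclusion $H_\om\otimes H\hookrightarrow\prod_\om(H_n\otimes H)$ sending $(\xi_n)_\om\otimes\eta$ to $(\xi_n\otimes\eta)_\om$. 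A direct computation on elementary tensors shows that $H_\om\otimes H$ is invariant under each element $(z_n)_\om$ of $\prod_\om(C_n\otimes B)$ lying in the image of $\rho$, and that $\rho$ of such an element restricts on $H_\om\otimes H$ to the spatial action of $z$. Since restriction to an invariant subspace does not increase the norm, $\|z\|_{\min}\le\|(z_n)_\om\|=\lim_\om\|z_n\|_{\min}=\|\rho(z)\|$, while $\|\rho(z)\|\le\|z\|_{\min}$ holds because $\rho$ is a $\ast$-homomorphism; hence $\rho$ is isometric.

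Granting this, $\Phi$ is an isometric $\ast$-embedding of $A\otimes B$ into the stably finite algebra $\prod_\om M_{k_n}(B)$, so $A\otimes B$ is stably finite, completing the proof. Note that only the hypothesis that $A$ embeds into a matricial ultraproduct—that is, that $A$ is MF—is actually used, and QD supplies exactly such an embedding; working with $M_{k_n}(\bC)$ (rather than with $\Q$) is what makes the target $\prod_\om M_{k_n}(B)$ visibly stably finite.
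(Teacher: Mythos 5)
Your reduction to the ultraproduct $\prod_\om M_{k_n}(B)$ and the two closure facts you establish (ultraproducts and unital subalgebras of stably finite algebras are stably finite) are fine, as is the injectivity of $\pi\otimes\id_B$ into $\bigl(\prod_\om M_{k_n}\bigr)\otimes B$. The gap sits in the inequality you dismiss in one clause: the claim that $\|\rho(z)\|\le\|z\|_{\min}$ ``because $\rho$ is a $*$-homomorphism'' is unjustified and is false in general. On the algebraic tensor product $\bigl(\prod_\om M_{k_n}\bigr)\odot B$ the quantity $z\mapsto\lim_\om\|z_n\|_{\min}$ is a C$^*$-norm, and a $*$-homomorphism defined only on a dense $*$-subalgebra of a C$^*$-algebra need not be contractive; by Takesaki's minimality theorem (or by your own spatial argument) this norm \emph{dominates} the minimal norm, and for non-exact $B$ it is strictly larger. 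Indeed, the failure of exactness of $0\to\bigoplus_n M_n\to\prod_n M_n\to\prod_n M_n/\bigoplus_n M_n\to0$ after min-tensoring with $B$ (Wassermann, Kirchberg) is precisely the statement that the quotient norm $\limsup_n\|z_n\|_{\min}$ disagrees with the minimal norm on the algebraic tensor product. Since $B\in\k$ is not assumed exact (e.g.\ $B=C^*(\mathbb{F}_2)$ is residually finite-dimensional, hence lies in $\k$), your $\rho$ does not extend to $\bigl(\prod_\om M_{k_n}\bigr)\otimes_{\min}B$, so $\Phi$ is undefined on the completion where the offending isometry $v$ lives. Your closing remark that only the MF property of $A$ is used is a symptom of the same issue: MF hands you only an ultraproduct embedding, and commuting an ultraproduct past $\otimes_{\min}B$ is exactly the operation that requires exactness of $B$ --- compare Proposition \ref{stablyfinitetensor} of the paper, where exactness of $B$ is hypothesized and invoked for precisely this step.

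The repair is to stay at the finite-dimensional level, which is what quasidiagonality (as opposed to MF) buys you and what the paper's proof does: argue by contradiction with a non-unitary isometry $v\in M_k(A\otimes B)$, so that $v^*v=1$ and $\|vv^*-1\|=1$, and choose a \emph{single} u.c.p.\ map $\phi:A\to M_n$ from Voiculescu's characterization of quasidiagonality which is sufficiently multiplicative and isometric on the finitely many $A$-coefficients of an algebraic approximant of $v$. Then $\theta:=\phi_k\otimes\id_B:M_k(A)\otimes B\to M_{kn}(B)$ is automatically a well-defined u.c.p.\ map --- no exactness is needed because $M_n$ is finite-dimensional --- and $\theta(v)$ is an almost-isometry with $\|\theta(v)\theta(v)^*-1\|$ close to $1$; your own polar-decomposition perturbation then yields a genuine non-unitary isometry in $M_{kn}(B)$, contradicting stable finiteness of $B$.
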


\begin{proof}
Suppose $A\otimes B$ is not stably finite. It follows that there exists $v\in M_k(A\otimes B)$ such that $v^*v =1$ and $\|vv^* - 1\| = 1$. There is thus a u.c.p.\ map $\phi: A\to M_n$ such that for $\theta := \phi_k\otimes \id_B$ we have that $\theta(v)\in M_{kn}(B)$ satisfies $\|\theta(v)^*\theta(v) - 1\|< 1/4$ and $\|\theta(v)\theta(v)^* - 1\|> 3/4$. By functional calculus, there is $w\in M_{kn}(B)$ such that $w^*w = 1$ and $ww^*\not= 1$, contradicting that $B$ is stably finite.
\end{proof}



Note also that, by Lemma \ref{tensorMF}, we see that if the quasidiagonality problem has a positive solution, then Question \ref{tensornuclear} has a positive answer.

\section{The Quasidiagonality Problem}\label{qdc}
%
%


We start this section by defining the aforementioned weakening of the UCT problem.

\begin{df}
The \emph{weak UCT problem} is the statement that every element of $\k_{\on}$ is contained in an element of $\k_{\on}$ that belongs to the UCT class.  
\end{df}


\begin{df}
The \emph{weak quasidiagonality problem} is the statement that there is a simple locally universal element of $\k_{\on}$.
\end{df}

Note that in the statement of the weak quasidiagonality problem, we really do need to use an ultrapower of $B$ as there are stably finite, (nuclear) unital \cstar-algebras that are not contained in a simple, stably finite, (nuclear) unital \cstar-algebra.
%
%

Here is the main result of this section.

\begin{thm}\label{weakconjunction}
Suppose that both the weak UCT problem and weak quasidiagonality problem have positive solutions.  Then the quasidiagonality problem has a positive solution.
\end{thm}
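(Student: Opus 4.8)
The plan is to reduce the full quasidiagonality problem to the single assertion that the simple locally universal algebra supplied by the weak quasidiagonality problem is itself quasidiagonal. So fix a simple locally universal $E \in \k_{\on}$. Recalling that for nuclear algebras quasidiagonality coincides with being MF (Choi--Effros) and that being MF is inherited by subalgebras, I would first isolate the following transfer principle: if $E$ is MF, then every separable subalgebra of $E^\om$ is MF. Granting this, local universality of $E$ forces every $A \in \k_{\on}$ to embed into $E^\om$ and hence to be MF, i.e.\ quasidiagonal; thus it suffices to prove that $E$ is MF. The transfer principle itself is routine from the local, matricial characterization of MF algebras together with a representing-sequence argument (equivalently, from the fact that an iterated ultrapower $(\Q^{\u})^\om$ embeds into a single ultrapower of $\Q$ and that a separable algebra sees only one ultrapower).

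To show that $E$ is MF I would bring in the weak UCT problem and the theorem of Tikuisis--White--Winter \cite{TWW}. Applying the weak UCT problem to $E$ produces some $F \in \k_{\on}$ in the UCT class with $E \subseteq F$; note that $F$, containing the locally universal $E$, is again locally universal. The form of \cite{TWW} I want to invoke requires a \emph{simple} UCT algebra (equivalently, in the tracial formulation, a \emph{faithful} quasidiagonal trace, and simplicity is exactly what guarantees faithfulness of a trace on a stably finite nuclear, hence exact, algebra). Since $F$ need not be simple, the plan is to embed $F$ unitally into a simple, separable, unital, nuclear, stably finite algebra $G$ that still lies in the UCT class. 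Such a $G$ is then simultaneously simple and locally universal (it contains $F$), carries a faithful trace by the Blackadar--Handelman/Haagerup theory of quasitraces, and, being nuclear and UCT, is quasidiagonal by \cite{TWW}. Hence $G$ is MF, so its subalgebra $E \subseteq F \subseteq G$ is MF, which closes the loop with the reduction above. Lemma \ref{tensorMF} is available to certify stable finiteness of any tensor-product building blocks entering the construction of $G$.

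The main obstacle is precisely the reconciliation of simplicity with membership in the UCT class: the weak quasidiagonality problem hands us a \emph{simple} locally universal algebra but gives no control over its UCT status, while the weak UCT problem hands us a UCT superalgebra at the cost of simplicity. Bridging the two — embedding a possibly non-simple stably finite nuclear UCT algebra into a simple one of the same type — is the delicate step, and it must be carried out without covertly assuming the (open) AF-embedding problem; keeping both stable finiteness and the UCT intact under such a simplification is where the real work lies. A secondary, more technical point is the careful verification of the transfer principle for MF-ness through the iterated ultrapower, but this I expect to be entirely standard.
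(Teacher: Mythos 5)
Your reduction to showing that the simple locally universal algebra $E$ is MF is sound, but the plan for actually proving that $E$ is MF has a genuine gap at exactly the point you flag as ``the delicate step,'' and that step cannot be carried out as stated. You propose to take the UCT superalgebra $F\supseteq E$ furnished by the weak UCT problem and embed it unitally into a \emph{simple}, separable, unital, nuclear, stably finite UCT algebra $G$. But such a $G$ need not exist: there are stably finite, nuclear, unital \cstar-algebras that are not contained in \emph{any} simple, stably finite, nuclear, unital \cstar-algebra --- this is precisely why the weak quasidiagonality problem is phrased in terms of embeddings into an \emph{ultrapower} of a simple algebra rather than into the algebra itself. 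So the upward route from ``UCT but possibly non-simple'' to ``simple and UCT'' is blocked, and no amount of care with Lemma \ref{tensorMF} will restore it.

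The missing idea is to transfer the UCT \emph{downward} rather than simplicity \emph{upward}. The weak quasidiagonality hypothesis is used not merely as the target of the final reduction but as the input to an omitting-types/Robinson-forcing argument: since simplicity is definable by a uniform family of existential formulae and there is a simple locally universal element of $\k_{\on}$, one can build a separable $A\in\k_{\on}$ that is simultaneously \emph{simple} and \emph{existentially closed for} $\k_{\on}$. The weak UCT problem then places $A$ inside some $B\in\k_{\on}$ in the UCT class, and the key fact --- that the UCT passes to existentially closed (equivalently, approximately split injective) subalgebras --- pulls the UCT down onto $A$ itself. Now $A$ is simple, nuclear, stably finite, and UCT, so \cite{TWW} applies directly; quasidiagonality of the existentially closed $A$, together with Lemmas \ref{jep} and \ref{tensorMF}, then yields local universality of $A$ and hence quasidiagonality of every element of $\k_{\on}$. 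Without existential closedness of the simple algebra (or some substitute mechanism for inheriting the UCT from a superalgebra), your argument cannot get a simple UCT algebra into position for \cite{TWW}.
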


\begin{proof}
By Lemmas \ref{jep} and \ref{tensorMF}, it is enough to prove that there exists $A\in \k_{\on}$ that is existentially closed for $\k_{\on}$ and quasidiagonal.  The version of the Omitting Types theorem via Robinson forcing (see \cite[Corollary 4.7]{forcing}), together with the assumption that the weak quasidiagonality problem has a positive answer and the fact that simplicity is definable by a uniform family of existential formulae (see \cite[Theorem 5.7.3(6)]{munster}), allows us to construct a separable algebra $A$ that is simple and ``finitely generic'' for $\k_{\on}$.  (Finitely generic simply means that the structure is obtained from finite forcing as opposed to infinite forcing.)  The discussion in \cite[Appendix A]{kep} shows that if $\k_{\on}$ were the class of models of an $\forall\exists$-theory, then structures that are finitely generic for $\k_{\on}$ belong to $\k_{\on}$ and are existentially closed for $\k_{\on}$.  In our case, $\k_{\on}$ is not the class of models of an $\forall\exists$-theory, but rather is a class of structures definable by a uniform family of existential formulae.  It is readily verified that the proof given in \cite[Appendix A]{kep} goes through in this setting as well (see also \cite[Corollary 4.6]{farahmagidor}), whence the separable, simple \cstar-algebra $A$ constructed belongs to $\k_{\on}$ and is existentially closed for $\k_{\on}$.  Since we are assuming that the the weak UCT problem has a positive solution, $A$ is a subalgebra of a separable $B\in \k_{\on}$ that belongs to the UCT class.  Since $A$ is existentially closed for $\k_{\on}$, by \cite[Theorem 2.10]{szabo} (see also \cite[Theorem 3.13]{gardella}), $A$ also belongs to the UCT class.  By the main result of \cite{TWW} referred to in the introduction, $A$ is quasidiagonal.
\end{proof}

\begin{rmk}
To run the preceding proof, one does not really need the full strength of the weak quasidiagonality problem for $\k_{\on}$.  Indeed, for each integer $n$, let $\varphi_n(a,b)$ denote the formula
$$\varphi_n(a,b):=\inf_{\{x\in A^n \ : \ \|\sum x_j^*x_j\|\leq 2\}}\|\sum x_j^*ax_j-b\|.$$  It is shown in \cite[Proposition 5.10.3]{munster} that $A$ is simple if and only if, for each $a,b\in A$, we have $\inf_n \varphi_n(a,b)=0$.  Consequently, to be able to apply the Omitting Types Theorem as in the proof of the previous theorem, one needs to be able to verify the following:  for each $A\in \k_{\on}$ and each $a_1,\ldots,a_m\in A$, one can find $B\in \k_{\on}$ satisfying the UCT and $b_1,\ldots,b_m\in B$ whose operator norm microstates are close to those of $a_1,\ldots,a_m$ and for which $\inf_n\varphi_n(b_i,b_j)$ is small for each $i,j$.  (Of course the $B$ and $b_1,\ldots,b_m$ depend on the level of precision of the microstates and the tolerance for $\inf_n \varphi_n$.)
\end{rmk}

\begin{rmk}
The situation is simpler if one works in $\k_{\ons}$.  Indeed, let $A$ be any element of $\k_{\ons}$ that is existentially closed for $\k_{\ons}$.  Then a positive solution to the weak UCT problem implies that $A$ is in the UCT class, whence quasidiagonal.  (Note here we only need the weak UCT problem to have a positive solution for simple algebras.)  It follows that all elements of $\k_{\ons}$ are quasidiagonal.  In other words, a positive solution to the weak UCT problem implies what Winter calls QDQ$_{\operatorname{simple},1}$ in \cite{winter}.  As Winter points out, it is plausible that QDQ$_{\operatorname{simple},1}$ could imply  a positive solution to the quasidiagonality problem.
\end{rmk}

We now turn to the local, finitary equivalent of the MF problem  First, we recall some terminology from \cite{kep}.  Recall that for an operator system $E$, $\ex(E):=\inf_X d_{\cb}(E,X)$, where $X$ ranges over all matricial operator systems and $d_{\cb}$ is the completely bounded version of the Banach-Mazur distance.  (This is not literally the definition given in \cite{kep} but is readily seen to be equivalent.)
\begin{df}\label{gnw} We say that a unital \cstar-algebra $A$ has \emph{good nuclear witnesses} if, for every finite-dimensional operator system $F\subset A$, there is a $\ast$-embedding $\rho:=(\rho_n)^{\bullet}:A\to \B(H)^{\om}$ such that $\ex(\rho_n(F))\to 1$ as $n\to \om$.
 \end{df}

In the context of this paper, it is natural to extend the above definition to the stably finite context.  Indeed, if in the notation above each $F_n$ is isomorphic (as an operator system) to a subsystem of a \cstar-algebra $Q_n$ that belongs to $\k$ (respectively $\k_{\os}$), then we say that $A$ has \emph{good (resp.\ simple) stably finite nuclear witnesses}.  Note that if $A$ has good stably finite nuclear witnesses, then $A$ is itself stably finite.

\begin{cor}\label{MF}
Suppose that the weak UCT problem and weak quasidiagonality problem both have positive solutions.  Then the MF problem is equivalent to the statement that every element of $\k$ has good stably finite nuclear witnesses.
\end{cor}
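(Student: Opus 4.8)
The plan is to prove Corollary \ref{MF} as a biconditional by showing each direction separately, leveraging the machinery already established. First I would handle the easier forward direction: assume the MF problem has a positive solution, so that every element of $\k$ is MF, meaning it embeds into $\Q^\om$. Given $A \in \k$ and a finite-dimensional operator system $F \subset A$, I would use the MF embedding together with the fact that $\Q$ (being a UHF algebra, hence AF and nuclear) has matricial approximation properties to produce the required embedding $\rho = (\rho_n)^\bullet$ witnessing $\ex(\rho_n(F)) \to 1$, with each microstate landing inside a copy of $\Q$ (or a matrix algebra $M_k$, which belongs to $\k_{\os}$). Since the microstate algebras can be taken to be matrix algebras, this in fact gives \emph{simple} stably finite nuclear witnesses, and a fortiori good stably finite nuclear witnesses.

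The interesting direction is the converse: assume every element of $\k$ has good stably finite nuclear witnesses, and deduce that every element of $\k$ is MF. Here I would invoke the output of Theorem \ref{weakconjunction} together with Lemma \ref{jep}. Under the standing hypotheses that both weak problems have positive solutions, Theorem \ref{weakconjunction} produces an $A_0 \in \k_{\on}$ that is existentially closed for $\k_{\on}$ and quasidiagonal; since $A_0$ is quasidiagonal nuclear it is MF, and I would argue it is in fact locally universal for $\k_{\on}$ (the joint embedding property for $\k_{\on}$ combined with existential closedness, via Lemma \ref{jep}, gives this). The crux is then to transfer local universality from $\k_{\on}$ to all of $\k$. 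Given an arbitrary $B \in \k$, the hypothesis supplies, for each finite-dimensional $F \subset B$, an embedding into $\B(H)^\om$ whose microstates $\rho_n(F)$ are asymptotically exact and sit inside subsystems of algebras $Q_n \in \k$; using $\ex(\rho_n(F)) \to 1$ I would approximate $F$ up to arbitrary completely-bounded tolerance by a finite-dimensional operator system embeddable in a nuclear (indeed matricial) \cstar-algebra, and then stitch these local nuclear approximations together across an ultraproduct to embed $B$ into an ultrapower of a locally universal nuclear algebra, which is MF. Concretely, the good witnesses let one build a u.c.p.\ factorization of the inclusion $F \hookrightarrow B$ through matricial operator systems that converges, so that $B$ embeds into $\Q^\om$ after passing to the ultraproduct.

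The main obstacle I expect is this transfer step: turning the \emph{operator-system-level} exactness data ($\ex(\rho_n(F)) \to 1$ with microstates in stably finite algebras) into an honest $\ast$-\emph{algebra} embedding of $B$ into an MF algebra. The definition of good witnesses only controls finite-dimensional operator subsystems and their completely bounded Banach--Mazur distance to matricial systems; promoting this to a genuine embedding requires a reflexivity or compactness argument showing that the local matricial approximations cohere, together with the fact that having $\ex \to 1$ at every finite-dimensional level forces membership in a locally universal MF class. I would route this through the observation that the locally universal MF algebra $\Q$ absorbs these matricial microstates, and that the weak quasidiagonality hypothesis guarantees the relevant nuclear pieces themselves embed into $\Q^\om$, so that the ultraproduct of the microstate factorizations yields the desired embedding $B \hookrightarrow \Q^\om$. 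Care is needed to ensure stable finiteness is preserved throughout, but Lemma \ref{tensorMF} and the definitional remark that good stably finite nuclear witnesses already force $A$ to be stably finite handle the consistency of the classes involved.
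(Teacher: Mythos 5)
Your forward direction is fine, and your converse correctly identifies that one should end up with a locally universal nuclear quasidiagonal algebra, but the route you take to get there has a genuine gap, and it is precisely at the step you flag as ``the main obstacle.'' The paper does not attempt to transfer local universality from $\k_{\on}$ to $\k$ by directly stitching the operator-system microstates of an arbitrary $B\in\k$ into a $\ast$-embedding $B\hookrightarrow \Q^\om$. That stitching does not go through as you describe: the witness algebras $Q_n$ in the definition of good stably finite nuclear witnesses are merely stably finite, not known to be MF (that is the very thing being proved), and the condition $\ex(\rho_n(F))\to 1$ controls only the completely bounded Banach--Mazur distance of finite-dimensional operator subsystems to matricial systems; it does not by itself produce a coherent multiplicative map of $B$ into $\Q^\om$. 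Your appeal to ``the weak quasidiagonality hypothesis guarantees the relevant nuclear pieces themselves embed into $\Q^\om$'' is circular in this context, since the $Q_n$ need not be nuclear.

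The missing idea is how the hypothesis is actually consumed: good stably finite nuclear witnesses for every element of $\k$ is exactly the local condition needed to run Robinson forcing over the class $\k$ itself and produce a single algebra $A$ that is existentially closed \emph{for all of $\k$} (not merely for $\k_{\on}$) and simultaneously nuclear (this is \cite[Section 3.2]{kep}). Once one has such an $A$, Theorem \ref{weakconjunction} gives that $A$ is quasidiagonal, Lemma \ref{tensorMF} gives that $A\otimes B$ is stably finite for every $B\in\k$ (supplying condition (3) of Lemma \ref{jep} for the class $\k$), and Lemma \ref{jep} then yields that $A$ is locally universal for $\k$. Since $A$ is nuclear and quasidiagonal, hence MF, every element of $\k$ embeds into $\Q^\om$ through $A^\om$. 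In short: the hypothesis is used once, to force nuclearity onto an existentially closed element of $\k$, and local universality then falls out of the tensor-product lemma rather than from any direct approximation of an arbitrary $B$.
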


\begin{proof}
As in \cite{kep}, the forward direction is immediate (and does not use either of the weak problems).  Now assume that every element of $\k$ has good stably finite nuclear witnesses.  Then by Robinson forcing, we can find an existentially closed element $A$ of $\k$ that is also nuclear.  (See \cite[Section 3.2]{kep} for more details.)  By Theorem \ref{weakconjunction}, we know that $A$ is quasidiagonal.  By Lemmas \ref{jep} and \ref{tensorMF}, $A$ is locally universal for $\k$, whence the MF problem has a positive solution.  
\end{proof}
   
\section{Nuclear existentially closed elements of $\k$}\label{sqdc}

In \cite{kep}, it is shown that the only possible \cstar-algebra that is nuclear and existentially closed amongst all \cstar-algebras is $\O_2$.  It is natural to wonder whether or not there is a stably finite version of this result.  Here is the natural guess:

\begin{conj}
If $A\in \k$ is existentially closed for $\k$ and nuclear, then $A\cong \Q$.
\end{conj}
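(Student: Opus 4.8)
The plan is to push the analogy with the $\O_2$ result of \cite{kep} through the modern classification machinery: I would aim to show that such an $A$ has the Elliott invariant of $\Q$ and is classifiable, and then invoke the classification of simple, separable, unital, nuclear, UCT \cstar-algebras (for AF targets, Elliott's classification by pointed ordered $K_0$) to conclude $A\cong\Q$. Since $\Q$ is simple with $K_0(\Q)\cong(\q,\q_{\geq 0},1)$, $K_1(\Q)=0$, and a unique trace, the real content is to extract exactly this invariant from existential closedness. Following the partial result quoted in the introduction, I would first reduce to the case where $A$ is simple and in the UCT class; note, however, that simplicity is \emph{not} free here, since (as remarked after the weak quasidiagonality problem) there are members of $\k$ that embed into no simple member of $\k$, so simplicity cannot be transferred through an embedding and must be argued separately.

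The transferable engine of the argument is tensorial $\Q$-stability. Because $\Q$ is AF, hence QD, Lemma \ref{tensorMF} gives that $A\otimes\Q$ is stably finite, so $A\otimes\Q\in\k$ when $A$ is nuclear, and the first-factor embedding $a\mapsto a\otimes 1$ exhibits $A$ as existentially closed inside $A\otimes\Q$. Since $\Q$ is strongly self-absorbing, there is a unital copy of each $M_k$ sitting approximately centrally in $A\otimes\Q$; the existence of matrix units for $M_k$ that approximately satisfy the defining relations and approximately commute with a given finite $F\subseteq A$ is an existential condition with parameters in $A$, and it holds in $A\otimes\Q$. Transferring it back through existential closedness yields a unital embedding $\Q\hookrightarrow A_\om\cap A'$, i.e. $A\cong A\otimes\Q$ is $\Q$-stable, hence $\mathcal{Z}$-stable with finite nuclear dimension, and therefore classifiable once simplicity and UCT are in hand; by \cite{TWW} it is moreover QD. This is where existentially closed subalgebras of II$_1$ factors enter: passing to $\pi_\tau(A)''\cong R$ for a trace $\tau$, one would use that $R$ is rich in projections to force tracial comparison and real rank zero on $A$, so that $A$ is tracially AF in the sense of Lin.

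To finish, I would identify the invariant. $\Q$-stability makes $K_*(A)\cong K_*(A)\otimes\q$, so $K_0(A)$ is a simple dimension group that is a $\q$-vector space, ordered by its pairing with the trace simplex; to match $(\q,\q_{\geq 0},1)$ I would need $A$ to have a \emph{unique} trace and $K_0(A)$ to have rank one under the trace pairing, together with $K_1(A)=0$, after which classification delivers $A\cong\Q$. The main obstacle is exactly this $K$-theoretic rigidity. Existential closedness manifestly controls \emph{local}, existentially expressible data — matrix-unit relations, approximate centrality, finite-spectrum approximations, and hence real rank zero and $\Q$-stability — but the rank of $K_0$, the vanishing of $K_1$, and the uniqueness of the trace are \emph{global} invariants invisible to existential formulas, so no formal transfer forces them; deriving simplicity runs into the same wall, compounded by the embedding obstruction above. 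This gap between the approximation properties that Robinson forcing and existential closedness control and the global $K$-theoretic invariant that pins down $\Q$ among AF algebras is precisely why the statement is posed as a conjecture, and closing it is the crux.
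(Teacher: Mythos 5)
This statement is posed in the paper as an open conjecture: the paper offers no proof, only partial progress (Corollary \ref{AF}, which shows that a \emph{simple}, nuclear, UCT algebra existentially closed for $\k$ is AF, and a later corollary resolving the case where $A$ is a tensor square, assuming weak UCT). Your proposal honestly stops short of claiming a proof, so there is no complete argument on either side to compare; what can be assessed is your diagnosis of where the difficulty lies, and there you are partly mistaken. You assert that simplicity, uniqueness of the trace, and the vanishing of $K_1$ are ``global invariants invisible to existential formulas,'' so that no formal transfer forces them. Proposition \ref{ecinafactor} shows the opposite: once one has an existential unital embedding of $A$ into a II$_1$ factor, existentiality alone yields simplicity (via the uniform Dixmier property), monotraciality with a definable trace, $K_1(A)=0$ (injectivity of the induced map on $K$-theory, with $K_1(M)=0$), uniqueness of the quasitrace, and strict comparison. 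The genuine obstructions are elsewhere: (a) producing such an existential embedding \emph{without} assuming simplicity in advance --- a nuclear $A\in\k$ has a trace by \cite{blackhand} and \cite{traces}, but that trace need not be faithful with factorial GNS closure unless $A$ is already simple, which is exactly why Corollary \ref{AF} takes $A\in\k_{\ns}$ as a hypothesis; (b) the UCT, which is needed both for \cite{TWW} and for any classification step and is not known to follow from existential closedness (hence the weak UCT problem); and (c) pinning down $K_0(A)$ as $(\q,\q_{\geq 0},1)$ --- Proposition \ref{ecinafactor} only produces a dense embedding of $K_0(A)$ into $\r$ as scaled ordered groups, which by Effros--Handelman--Shen identifies $A$ with \emph{some} simple monotracial AF algebra, not with $\Q$. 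On this last point you are right that the rank of $K_0$ is the crux.

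One constructive remark: your $\Q$-stability argument --- using Lemma \ref{tensorMF} to place $A\otimes\Q$ in $\k$, so that $a\mapsto a\otimes 1$ is an existential inclusion, and then pulling approximately central matrix units back into $A$ to get a unital embedding $\Q\to A^\om\cap A'$ --- is sound in outline and does not appear in the paper. But note that it does not close gap (c): a simple, monotracial, $\Q$-stable AF algebra need not be $\Q$, since $K_0$ can be a simple dimension group that is a $\q$-vector space of rank greater than one with a unique state (e.g.\ $\{(a,b)\in\q^2 : a+\sqrt{2}\,b>0\}\cup\{0\}$). By contrast, in the one case the paper does resolve (the tensor-square corollary), the extra ingredient that forces UHF is approximately inner flip via \cite{er}, not $\Q$-stability. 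So even granting simplicity and UCT, your plan would terminate at ``AF and $\Q$-stable,'' which is strictly weaker than the conjectured conclusion.
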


In this section, we make some progress towards settling the previous conjecture.  We begin by enumerating a list of general properties that hold of a unital \cstar-algebra that admits an existential (unital) embedding into a II$_1$ factor. Before doing so, we pause to recall a few standard definitions and results.

\begin{df}\label{defqt} A \emph{quasitrace} (really, a \emph{2-quasitrace}) on a \cstar-algebra $A$ is a function $\tau:A\to \mathbb{C}$ which satisfies:
\begin{enumerate}
\item $\tau(xx^*)=\tau(x^*x)\geq 0$ for all $x\in A$;
\item $\tau(a+ib)=\tau(a)+\tau(b)i$ for all self-adjoint $a,b\in A$;
\item $\tau(ab)=\tau(ba)$ for all self-adjoint and commuting $a,b\in A$;
\item there is a function $\tau_2:M_2(A)\to \mathbb{C}$ satisfying (1)-(3) above such that $\tau(x)=\tau_2(x\otimes e_{11})$ for all $x\in A$.
\end{enumerate}
\end{df}

\noindent By \cite[Proposition II.4.1]{blackhand} any quasitrace $\tau$ on $A$ extends to a quasitrace $\tau_n$ on $M_n(A)$ for all $n\in \mathbb N$. For a positive element $a\in M_k(A)$ and a quasitrace $\tau$ define the dimension function by $d_\tau(a) := \lim_{n\to\infty} \tau_k(a^{1/n})$. The C$^*$-algebra $A$ is then said to have \emph{strict comparison} if for all positive $a,b\in M_k(A)$ such that $d_\tau(a)< d_\tau(b)$ for all quasitraces $\tau$ on $A$ there is a sequence $r_n\in M_k(A)$ such that $\|r_n^*br_n - a\|\to 0$. The reader may consult \cite{kiro} for instance for the history and context of comparison properties in the classification of stably finite nuclear C$^*$-algebras. 

We are now ready to state and prove the main proposition of this section. As we will remark, some statements are known to hold by alternate means. However, we wish to illustrate how through the use of an existential embedding into a von Neumann algebra, one can simply and directly transfer von Neumann algebraic comparison theory to the embedded C$^*$-algebra, thereby avoiding many of the difficulties one confronts in working with comparison theory at the level of C$^*$-algebras. In the future, one could hope that model theoretic techniques could be used to ``regularize'' a stably finite C$^*$-algebra by embedding it into a suitably ``nice'' C$^*$-algebra with much better comparison properties.

\begin{prop}\label{ecinafactor}
Suppose that $A$ is a separable, unital \cstar-algebra that admits an (positive) existential unital embedding into a II$_1$ factor $M$.  Then:
\begin{enumerate}
\item $A$ is simple, monotracial, and the unique trace is definable;
\item there is a simple, monotracial AF-algebra $B$ such that $K_*(A)\cong K_*(B)$.
\item for any two projections $p, q\in A$ either $p\preceq q$ or $q\preceq p$ (``$\preceq$'' denotes Murray-von Neumann subequivalence);
\item $A$ has unique quasitrace;
\item $A$ has strict comparison.
\end{enumerate}
\end{prop}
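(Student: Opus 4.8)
The plan is to treat the existential embedding $\theta\colon A\to M$ as the engine behind every item, using it in its two equivalent guises from Section~\ref{ecreview}: as genuine preservation of (positive) existential formulae, and as an approximately split injection, i.e.\ a $*$-embedding $\sigma\colon M\to A^{\u}$ with $\sigma\circ\theta$ the diagonal map. Each structural property of $A$ is then transported from the much better understood $\mathrm{II}_1$ factor $M$. For simplicity in (1) I would invoke the existential characterisation from the remark after Theorem~\ref{weakconjunction}: $A$ is simple iff $\inf_n\varphi_n(a,b)=0$ for all $a,b$. Since each $\varphi_n$ is positive existential, $\varphi_n^A(a,b)=\varphi_n^M(\theta a,\theta b)$, and $M$ is simple as a \cstar-algebra (a nonzero ideal contains a projection, finitely many unitary conjugates of which dominate an invertible), so the right-hand infimum vanishes. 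For monotraciality the split injection gives a clean argument: any tracial state $\tau'$ on $A$ induces $\tau'^{\u}$ on $A^{\u}$; then $\tau'^{\u}\circ\sigma$ is a tracial state on the factor $M$, hence equals its unique trace $\tau_M$, and precomposing with $\theta$ (using $\sigma\theta=\mathrm{diag}$) gives $\tau'=\tau_M\circ\theta$. Thus $A$ has exactly one trace $\tau:=\tau_M\circ\theta$, faithful since $A$ is simple.

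To see $\tau$ is definable I would express $\tau_M$ via uniform Dixmier averaging: for positive contractions $x$ the positive existential formulae $\psi_k(x):=\inf\{\,\|\sum_i\lambda_i u_i x u_i^*\|\,\}$ (infimum over $k$ unitaries $u_i$ and convex weights $\lambda_i$) satisfy $\tau_M(x)\le\psi_k(x)$ and, by the uniform rate in Dixmier's theorem for factors, $\psi_k\to\tau_M$ uniformly over positive contractions of $M$; since $\psi_k^A=\psi_k^M\circ\theta$, the $\psi_k^A$ converge uniformly to $\tau$, exhibiting $\tau$ (extended by linearity) as a definable predicate. Item (3) transports comparison of projections: for projections $p,q\in A$ one of $\theta p\preceq\theta q$, $\theta q\preceq\theta p$ holds in $M$ by comparing traces, and applying $\sigma$ (equivalently transferring the existential predicate $\inf_v(\|v^*v-p\|+\|vv^*-vv^*q\|)$) produces an approximate subequivalence in $A$, which for projections upgrades to a genuine Murray--von Neumann subequivalence by a standard functional-calculus perturbation. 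Item (4) runs exactly as monotraciality: a normalised quasitrace $\gamma$ on $A$ yields $\gamma^{\u}$ on $A^{\u}$, and $\gamma^{\u}\circ\sigma$ is a quasitrace on the von Neumann algebra $M$, hence equals $\tau_M$ (quasitraces on factors are traces), so $\gamma=\tau$.

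For (5), since $\theta$ is an isometric $*$-homomorphism with $\tau=\tau_M\circ\theta$ one gets $d_\tau(a)=d_{\tau_M}(\theta a)$ for every positive $a\in M_k(A)$; hence $d_\tau(a)<d_\tau(b)$ forces $d_{\tau_M}(\theta a)<d_{\tau_M}(\theta b)$, so $\theta a\preceq\theta b$ in the factor $M$ (which has strict comparison), and applying $\sigma$ returns, for each $\varepsilon$, an $r\in A$ with $\|r^*br-a\|<\varepsilon$, i.e.\ $a\preceq b$. Finally, for (2) I would first show $K_1(A)=0$: a unitary $u\in M_k(A)$ maps to a unitary in the factor $M_k(M)$, which is a single exponential $e^{ih}$, so the existential quantity $\inf_h\|u-e^{ih}\|$ vanishes in $A$, placing $u$ in the closure of the identity component; as that component is open and hence closed, $[u]=0$. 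For $K_0$, faithfulness of $\tau$ rules out nonzero positive infinitesimals while (3) and (5) show that subequivalence of projections is decided by the trace, so $\tau_*\colon K_0(A)\to\mathbb{R}$ is an injective ordered embedding with $K_0(A)^+=\{g:\tau_*(g)>0\}\cup\{0\}$; thus $(K_0(A),K_0(A)^+,[1])$ is a simple dimension group, realised by a simple, monotracial AF algebra $B$ via Effros--Handelman--Shen and Elliott's classification, giving $K_*(A)\cong K_*(B)$.

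The main obstacle is the definability of the trace in (1): uniqueness and faithfulness are soft consequences of the split injection, but definability requires a modulus of Dixmier averaging that is \emph{uniform} over all positive contractions and survives transfer through a positive existential embedding, which is exactly where the model theory meets hard operator-algebraic estimates. Secondary care is needed to justify that quasitraces on the von Neumann algebra $M$ are traces (used in (4)) and to upgrade the approximate subequivalences produced by $\sigma$ to exact Murray--von Neumann and Cuntz subequivalences in (3) and (5).
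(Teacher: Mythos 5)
Your proposal is correct and follows the paper's overall strategy: every item is transported from the II$_1$ factor through the existential embedding, read either as preservation of positive existential formulae or as the approximate split injection $\sigma\colon M\to A^{\u}$ with $\sigma\circ\theta$ diagonal. Items (1), (2), (3), and (5) match the paper's proof up to the level of citation: where you sketch the uniform Dixmier averaging needed for definability of the trace, and the exponential/trace arguments giving $K_1(A)=0$ and the order-embedding $K_0(A)\to\mathbb{R}$, the paper simply cites \cite[Proposition 2.5]{kep}, \cite[Lemma 7.2.2]{munster}, and the general fact that (positively) existential embeddings induce injections on K-theory \cite{szabo}, before invoking Effros--Handelman--Shen exactly as you do. The one genuinely different route is item (4). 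The paper stays inside $A$: it uses (3) and the density of trace values to show that every quasitrace agrees with $\tau$ on the projections of $A$, and then transfers from $M$ the approximation of self-adjoint contractions by linear combinations of orthogonal projections, invoking the Blackadar--Handelman almost-additivity estimate on the (now only almost commuting) images in $A$. You instead push the quasitrace forward: $\gamma^{\u}\circ\sigma$ is a quasitrace on the von Neumann algebra $M$, hence equals $\tau_M$, and precomposing with $\theta$ returns $\gamma=\tau$. This is valid and arguably cleaner, since the spectral-theorem approximation is performed in $M$ where it is exact and where uniqueness of the dimension function is classical; but note that it does not avoid the Blackadar--Handelman estimate, because you need \cite[Corollary II.2.6]{blackhand} already to check that $\gamma^{\u}$ is additive on commuting self-adjoints of $A^{\u}$, whose self-adjoint representatives only almost commute. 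With that point made explicit, both arguments carry the same operator-algebraic content, and your version of (4) would serve as a legitimate alternative to the paper's.
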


\begin{proof}
As in \cite[Proposition 2.5]{kep}, we have that $A$ has the uniform Dixmier property in the parlance of \cite[section 7.2]{munster}, whence it is simple and definably monotracial by \cite[Lemma 7.2.2]{munster}, proving (1).  In fact, the analysis given in \cite{munster} shows that the trace is existentially definable and the embedding of $A$ into $M$ is existential in the language of \emph{tracial} \cstar-algebras, that is, the language of \cstar-algebras enlarged with a predicate symbol naming the trace. Now since the embedding of $A$ in $M$ is existential, the induced map on K-theory is injective. (See, for instance, \cite[section 2.3]{kep} or \cite[Theorems 2.8 and 4.19]{szabo}.) In particular, $K_1(A)=0$.  Since the embedding of $A$ into $M$ is also existential in the language of tracial \cstar-algebras, the induced injection $K_0(A)\to K_0(M)=\r$ is an embedding of scaled ordered groups.  Moreover, once again using existentiality in the language of tracial \cstar-algebras, we see that the image of the embedding is dense in $\r$.   
It follows from the Effros-Handelman-Shen Theorem \cite{ehs} (more precisely, the version stated in \cite[Theorem 7.4.3]{black}) that $K_0(A)\cong K_0(B)$ as scaled ordered groups for some simple, unital monotracial AF algebra $B$, proving (2). 

For (3), fix projections $p,q\in A$.  Without loss of generality, we may assume that $p\preceq q$ in $M$; we show that $p\preceq q$ in $A$.  Let $\phi(x,y)$ be the formula $\inf_z\max(\|x-z^*z\|,\|y-zz^*\|)$.  Since $p\preceq q$ in $M$ and $A$ is existentially closed in $M$, we have
$$(\inf_{p'}\max(\|p'q-p'\|,\phi(p',p)))^A=0.$$  In the above formula, the infimum ranges over projections.  Fix $\epsilon>0$ sufficiently small and let $\delta=\Delta_\phi(\epsilon),$ where $\Delta_\phi$ is the modulus of uniform continuity for $\phi$.  Let $\eta>0$ be sufficiently small and take a projection $p'\in A$ such that $\max(\|p'q-p'\|,\phi(p',p))<\eta$.  Then if $\eta$ is sufficiently small, there is a projection $p''\in A$ with $p''\leq q$ and $\|p''-p'\|<\delta$, whence $\phi(p'',p)<\epsilon$; if $\epsilon>0$ was chosen sufficiently small, then we have that $p''$ is Murray-von Neumann equivalent to $p$, witnessing that $p\preceq q$.

By part (3) and the density of the trace values on projections in $A$, the standard argument for showing that the trace is the unique dimension function on the projection lattice of $M$ applies, and we know that $\phi(p) = \tau(p)$ for any projection $p\in A$ and any quasitrace $\phi$ on $A$. Since quasitraces are norm continuous and are almost linear on almost commuting elements of $A$ by \cite{blackhand} and any self-adjoint contraction in $M$ may be $1/n$-norm approximated by a linear combination of $2n$ orthogonal projections in $M$, the conclusion of (4) follows.

For (5), let $a,b\in M_n(A)_+$ be contractions; by part (4), we only need to show that if the support projection of $a$ is subequivalent to the support projection of $b$, then there exists a sequence $r_n\in A$ such that $\|r_n^* b r_n - a\|\leq 1/n$ for all $n$. We can easily find such a sequence in $M$ (even with $\|r_n\|\leq n$), so we are done by existentiality. 
\end{proof}

\begin{rmks} 

\

\begin{enumerate}
\item The proof of statement (3) boils down to the quite useful fact that Murray-von Neumann equivalence is existential and weakly stable in the language of C$^*$-algebras.
\item By the main result of \cite{traces}, statement (4) is immediately implied by statement (1) in the case that $A$ is exact. 
\item In \cite[Theorem 8.2.1]{munster}, it is shown that strict comparison is $\forall\exists$-axiomatizable, yielding a different proof of item (5) in the previous proposition.
\end{enumerate}
\end{rmks}
%

\begin{cor}\label{AF}
Suppose that $A\in \k_{\ns}$ is in the UCT class and existentially closed for $\k$.  Then $A$ is AF.
\end{cor}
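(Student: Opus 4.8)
The plan is to reduce the statement to Proposition \ref{ecinafactor} by embedding $A$ existentially into a II$_1$ factor, and then to recognize $A$ as an AF algebra via the classification theorem for $\mathcal Z$-stable \cstar-algebras. First I would dispose of the trivial case: if $A$ is finite-dimensional then, being simple, it is a matrix algebra and hence AF. So assume $A$ is infinite-dimensional. Since $A$ is nuclear it is exact, and since it is stably finite it carries a quasitrace by Blackadar--Handelman, which is a genuine tracial state by Haagerup's theorem \cite{traces}; choose an extreme trace $\tau$. As $A$ is simple, $\tau$ is faithful, so the GNS representation $\pi_\tau$ is faithful, and because $\tau$ is extreme and $A$ is infinite-dimensional, $M := \pi_\tau(A)''$ is a II$_1$ factor containing $A$.

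The key point is that the inclusion $A \hookrightarrow M$ is existential even though $M \notin \k$ (a II$_1$ factor is not norm-separable). Indeed, existential closedness for $\k$ propagates to arbitrary stably finite unital extensions: given a quantifier-free $\phi$, a tuple $a$ from $A$, and $\e > 0$, choose $b \in M_1$ with $\phi(a,b)$ within $\e$ of $\inf_{M_1}\phi(a,\cdot)$, and set $N := C^*(A,b) \subseteq M$. Then $N$ is separable, unital, and stably finite (subalgebras of stably finite algebras are stably finite), so $N \in \k$; existential closedness of $A$ in $N$ gives $\inf_{A_1}\phi(a,\cdot) = \inf_{N_1}\phi(a,\cdot) \le \inf_{M_1}\phi(a,\cdot) + \e$, and since $\inf_{M_1}\phi(a,\cdot)\le \inf_{A_1}\phi(a,\cdot)$ trivially, letting $\e \to 0$ yields equality. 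Thus $A$ admits a positive existential unital embedding into the II$_1$ factor $M$, and Proposition \ref{ecinafactor} applies.

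From Proposition \ref{ecinafactor} I obtain that $A$ is simple and monotracial, that there is a simple monotracial AF algebra $B$ with $K_0(A)\cong K_0(B)$ as scaled ordered groups and $K_1(A)=0=K_1(B)$, and that $A$ has strict comparison. Being simple, separable, unital, nuclear, and monotracial (hence with finitely many extreme traces) with strict comparison, $A$ is $\mathcal Z$-stable by the work of Matui and Sato. Moreover, being simple, separable, unital, nuclear, stably finite, and in the UCT class, $A$ is quasidiagonal by the main theorem of \cite{TWW}. It remains to match Elliott invariants: both $A$ and $B$ are simple, separable, unital, nuclear, $\mathcal Z$-stable, and satisfy the UCT (for $B$ because AF algebras do). Their $K_1$-groups vanish and their scaled ordered $K_0$-groups are isomorphic; since $A$ and $B$ are simple with $K_0$ of dense image in $\r$, the unique trace induces the unique state on $K_0$, so any scaled-ordered-group isomorphism automatically intertwines the tracial pairings. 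Hence $A$ and $B$ have isomorphic Elliott invariants, and the classification theorem for simple, separable, unital, nuclear, $\mathcal Z$-stable \cstar-algebras in the UCT class (surveyed in \cite{winter}) gives $A \cong B$; as $B$ is AF, so is $A$.

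The model-theoretic heart of the argument is clean: it is precisely the transfer of existential closedness to the non-separable extension $M$ in the second paragraph, which is what powers Proposition \ref{ecinafactor}. I expect the main obstacle to lie instead in importing the two external analytic inputs --- the passage from strict comparison to $\mathcal Z$-stability and the classification theorem --- and in verifying that the \emph{tracial} component of the Elliott invariant matches, which here is automatic only because simplicity forces uniqueness of the state on $K_0$.
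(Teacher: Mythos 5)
Your proposal is correct and follows essentially the same route as the paper: produce an existential unital embedding of $A$ into a II$_1$ factor (the paper passes through the hyperfinite factor via semidiscreteness of the tracial weak closure, while you use the GNS factor directly, which suffices for Proposition \ref{ecinafactor}), invoke that proposition to get the model AF algebra $B$ with matching invariant, and conclude by classification using strict comparison to supply the regularity hypothesis (the paper cites Sato--White--Winter and \cite[Corollary D]{TWW} where you cite Matui--Sato and the $\mathcal Z$-stable classification). The extra details you supply --- transferring existential closedness to the non-separable extension and checking the tracial pairing on $K_0$ --- are correct and are implicit in the paper's argument.
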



\begin{proof}
We have that $A$ is separable, simple, and nuclear, whence $A$ admits a faithful trace by \cite{traces}. It follows that the von Neumann algebraic closure of $A$ with respect to the trace is semidiscrete, whence is hyperfinite by Connes' classification of injective factors \cite{connes}. Thus we have a unital embedding of $A$ into $\R$ (the hyperfinite II$_1$ factor), which is moreover existential since $A$ is existentially closed for $\k$.  Let $B$ be the simple, monotracial AF algebra given by the previous proposition.  By \cite[Corollary D]{TWW}, we can conclude that $A\cong B$ since we know that $A$ is also of finite nuclear dimension, which follows from item (5) in Proposition \ref{ecinafactor} and the resolution of the Toms-Winter conjecture in the monotracial case \cite{sww}.
\end{proof}


%

\begin{rmk}
By \cite[Corollary 6.5]{TWW}, if $A\in \k_{\ons}$ is monotracial and UCT, then $A$ embeds into a simple, monotracial AF algebra.  Since being AF is definable by a uniform family of existential formulae \cite[Theorem 2]{omitting}, an existential substructure of an AF algebra is AF again, whence we obtain a different proof of Corollary \ref{AF}.  However, we prefer the above proof as it follows from the main result of \cite{TWW} in a more elementary fashion.
\end{rmk}

\begin{question}
Is there a proof of Corollary \ref{AF} that avoids the use of the ideas in \cite{TWW}?
\end{question}

\begin{rmk}
A positive solution to the previous question could lead to an alternate approach to (the nontrivial direction of) Corollary \ref{MF} that is more elementary in nature.  Indeed, from good stably finite nuclear witnesses, we know that there is $A\in \k$ that is existentially closed for $\k$ and nuclear.  If Question \ref{tensornuclear} has a positive answer, then by Lemma \ref{jep}, $A$ is locally universal for $\k$.  By a positive solution to the weak QD problem, it follows that there is $C\in \k_{\ons}$ such that is locally universal for $\k$.  One can now apply Robinson forcing again to conclude that there is $D\in \k$ that is existentially closed for $\k$ that is also \emph{simple} and nuclear.  By a positive solution to the weak UCT problem, $D$ is also UCT, whence $D$ is AF by Corollary \ref{AF}.  By Lemma \ref{jepsf}, $D$ is locally universal for $\k$, whence the MF problem has a positive solution.  
\end{rmk}

In \cite{kep}, it is shown that $\O_2$ is the only possible existentially closed \cstar-algebra that is a tensor square.  We can (assuming weak UCT) establish the stably finite version of this statement:

\begin{cor}
Assume that the weak UCT problem has a positive solution.  Suppose that $A$ is existentially closed for $\k$ and that there is a \cstar-algebra $B$ such that $A\cong B\otimes B$.  Then $A\cong \Q$ (whence the MF problem has a positive solution).  If, in addition, $B$ itself is also existentially closed for $\k$, then $B\cong \Q$.
\end{cor}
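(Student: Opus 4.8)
The plan is to show that $A$ is a simple, nuclear, unital \cstar-algebra in the UCT class that is existentially closed for $\k$, so that Corollary \ref{AF} applies and forces $A$ to be AF, after which the identification $A\cong\Q$ is carried out at the level of $K$-theory. First I record that $B\in\k$: since $A=B\otimes B$ is unital and separable so is $B$, and since $B$ embeds unitally into $A$ as $B\otimes 1$ and unital subalgebras of stably finite algebras are stably finite, $B$ is stably finite. Note also that $A$ is nuclear if and only if $B$ is, as a minimal tensor product is nuclear precisely when both factors are. The hardest and most important step is to prove that $A$ is nuclear, and here I would adapt the argument of \cite{kep} showing that an existentially closed tensor square is automatically nuclear: the decomposition $A=B\otimes B$ furnishes a symmetry (the tensor flip, and more usefully a ``half--flip'' realized after one further tensor power) which, because $A$ is existentially closed, can be approximated inside $A$ itself, and this approximate symmetry is exactly what is needed to produce the completely positive approximations witnessing nuclearity (equivalently, good nuclear witnesses in the sense of Definition \ref{gnw}). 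I expect this to be the main obstacle, as it is the one place where the tensor--square hypothesis does genuine work and where the continuous model theory must be combined with the structure theory of tensor products.

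Granting nuclearity, the remaining structural facts follow as in the proof of Corollary \ref{AF}. Being nuclear (hence exact) and stably finite, $A$ carries a trace, and existential closedness lets me promote a trace representation to an existential unital embedding of $A$ into the hyperfinite II$_1$ factor $\R$; Proposition \ref{ecinafactor} then shows that $A$ is simple and monotracial, that $K_1(A)=0$, and that $K_0(A)$ embeds as a dense ordered subgroup of $\r$ isomorphic to the $K_0$ of a simple monotracial AF algebra. Since the weak UCT problem has a positive solution, $A$ is a subalgebra of some $B'\in\k_{\on}$ lying in the UCT class, and because $A$ is existentially closed, \cite[Theorem 2.10]{szabo} places $A$ itself in the UCT class. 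Now $A$ is simple, nuclear, UCT, and existentially closed for $\k$, so Corollary \ref{AF} gives that $A$ is AF. In particular $A$ is quasidiagonal, whence by Lemma \ref{tensorMF} the algebra $A\otimes C$ is stably finite for every $C\in\k$; as $A\otimes C$ contains unital copies of both $A$ and $C$, condition (3) of Lemma \ref{jep} holds and $A$ is locally universal for $\k$. In particular $\Q$ embeds into an ultrapower of $A$, so the MF problem has a positive solution.

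It remains to identify $A$ with $\Q$, which by Elliott's classification of AF algebras amounts to showing $K_0(A)\cong\mathbb{Q}$ as scaled ordered groups (recall $K_1(A)=0$). First, $K_0(A)$ is divisible: fixing a projection $p\in M_k(A)$ and an integer $n$, the algebra $A\otimes\Q$ lies in $\k$ (it is stably finite by Lemma \ref{tensorMF}, as $\Q$ is AF hence quasidiagonal), and inside it $p\otimes 1$ is Murray--von Neumann equivalent to $n$ orthogonal copies of $p\otimes e$ for a projection $e\in\Q$ with $n[e]=[1]$; since Murray--von Neumann equivalence is existential and weakly stable and $A$ is existentially closed in $A\otimes\Q$, there is an honest projection $q\in M_k(A)$ with $n[q]=[p]$ in $K_0(A)$. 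As $p$ and $n$ were arbitrary, $K_0(A)$ is divisible, so together with the dense embedding $K_0(A)\hookrightarrow\r$ it is a $\mathbb{Q}$--vector space densely ordered in $\r$. To pin down the rank I would use the tensor--square structure: describing $K_0(A)$ via the K\"unneth theorem in terms of $K_0(B)$ and combining this multiplicative description of the trace values with strict comparison (Proposition \ref{ecinafactor}(5)) and existential closedness—which realize exactly the rationally forced projections and no others—forces $K_0(A)$ to have rank one, whence $K_0(A)\cong\mathbb{Q}$ and $A\cong\Q$. This rank computation is the delicate endgame, though of the two nontrivial points it is secondary to the nuclearity step above.

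Finally, suppose in addition that $B$ is existentially closed for $\k$. Since $A=B\otimes B\cong\Q$ lies in $\k$ and $B$ embeds unitally into it as $B\otimes 1$, existential closedness of $B$ makes this embedding existential; moreover $B$ is nuclear because $A$ is. Repeating the analysis above with the existential embedding $B\hookrightarrow\Q$ in place of the embedding into $\R$, I get that $B$ is simple, monotracial, UCT (via \cite[Theorem 2.10]{szabo} again), and hence AF by Corollary \ref{AF}, with $K_1(B)=0$. Existentiality of $B\hookrightarrow\Q$ yields an injection $K_0(B)\hookrightarrow K_0(\Q)=\mathbb{Q}$, so $K_0(B)$ has rank one; since $K_0(B)\otimes K_0(B)\cong K_0(B\otimes B)=K_0(\Q)=\mathbb{Q}$ by the K\"unneth theorem, the only possibility is $K_0(B)\cong\mathbb{Q}$, and therefore $B\cong\Q$.
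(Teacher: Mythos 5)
Your overall architecture (flip symmetry $\Rightarrow$ nuclearity and simplicity, weak UCT $\Rightarrow$ UCT class via \cite[Theorem 2.10]{szabo}, Corollary \ref{AF} $\Rightarrow$ AF, then identify $A$ with $\Q$) matches the paper's first argument up to the last step, and your divisibility argument for $K_0(A)$ via existential closedness in $A\otimes\Q$ is correct. But the endgame has a genuine gap: divisibility plus a dense order embedding $K_0(A)\hookrightarrow\r$ does not pin down $A$ (e.g.\ $\mathbb{Q}+\mathbb{Q}\sqrt{2}$ is a divisible, densely ordered subgroup of $\r$ realized by a simple monotracial AF algebra), and the K\"unneth theorem cannot force rank one: a $\mathbb{Q}$-vector space $V$ of countably infinite dimension satisfies $V\otimes V\cong V$, so the relation $K_0(A)\cong K_0(B)\otimes K_0(B)$ is consistent with any rank. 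The appeal to ``existential closedness realizes exactly the rationally forced projections and no others'' is not an argument -- existential closedness produces projections, it does not exclude them. What closes this gap in the paper is the theorem of Effros and Rosenberg \cite{er} that an AF algebra with approximately inner flip is UHF; this gives $K_0(A)\subseteq\mathbb{Q}$, which combined with your divisibility (or with existential closedness of $A$ inside $\Q$) yields $K_0(A)=\mathbb{Q}$ and hence $A\cong\Q$. Since you have already (implicitly) established approximately inner flip in your nuclearity step, you should carry it forward and invoke \cite{er} here rather than attempting a K\"unneth computation.

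Two smaller points. First, in the nuclearity step you gesture at ``adapting the argument of \cite{kep}'' to show the flip is approximately inner, but since $A$ is only existentially closed for $\k$ (not for all \cstar-algebras), you must check that the flip can be realized by a unitary in an \emph{extension belonging to} $\k$; the paper does this by observing that $A\rtimes_\alpha\mathbb{Z}_2$ embeds into $M_2(A)$ and is therefore stably finite. Without this remark the existential closedness hypothesis cannot be applied. Second, for the final claim about $B$, the paper's route is shorter: once $A\cong\Q$, the unital inclusion $B\cong B\otimes 1\subseteq\Q$ together with existential closedness of $B$ for $\k$ gives $B\cong\Q$ directly (e.g.\ by \cite[Theorem 2.9(6)]{szabo}, since $\Q$ is strongly self-absorbing); your K-theoretic argument for $B$ is correct as far as it goes, but it inherits the dependence on the incomplete identification of $A$.
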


\begin{proof}
Let $\alpha:A\to A$ denote the flip automorphism, namely $\alpha(a\otimes b)=b\otimes a$ for all $a,b\in B$.  Note that the associated crossed product C$^*$-algebra, $A\rtimes_\alpha \mathbb{Z}_2$,  is again stably finite as it can be realized as a subalgebra of $M_2(A)$. Since $A$ is existentially closed for $\k$, it follows that $\alpha$ is approximately inner (see \cite[Section 3]{ecfactor}), whence $B$ has \emph{approximately inner flip}.  

At this point, there are a couple of (more or less equivalent) ways to conclude that $A\cong \Q$.  One way is to note that $A\cong B\otimes B$ also has approximately inner flip (see \cite{er}), so is simple and nuclear (again, see \cite{er}).  By the weak UCT, Corollary \ref{AF} implies that $A$ is AF.  In \cite{er}, it is shown that an AF algebra with approximately inner flip is UHF.  Since $A$ is existentially closed amongst the UHF algebras, we must have $A\cong \Q$.  

Alternatively, by \cite[Proposition 1.9(ii)]{ssa}, we have that $B^{\otimes \infty}$ (the tensor product of $B$ with itself countably many times) is \emph{strongly self-absorbing} (see \cite{ssa} for the definition).  However, in \cite[Theorem 2.9(6)]{szabo}, it is shown that if $D$ is strongly self-absorbing, then any algebra existentially closed in $D$ must be isomorphic to $D$.   Since $A\cong B\otimes B$ is existentially closed in $B^{\otimes \infty}$, we can conclude that $A\cong B^{\otimes \infty}$.  By the positive solution to the weak UCT problem, $A$ satisfies UCT, whence $A\subseteq \Q$ as \cite{TWW} implies that all strongly self-absoring \cstar-algebras satisfying the UCT embed in $\Q$.  Finally, since $A$ is existentially closed in $\Q$, we can conclude that $A\cong \Q$.

If $B$ is existentially closed for $\k$, then since $B\subseteq A\cong \Q$, we can conclude that $B\cong \Q$.
\end{proof}

\begin{rmk} The use of \cite[Theorem 2.9(6)]{szabo} in the proof above allows us the opportunity to present a model-theoretic proof of that result. Indeed, suppose that $D$ is strongly self-absorbing and $E$ is existentially closed in $D$.  Fix an embedding $D\to E^\om$ that restricts to the diagonal embedding on $E$.  By considering the chain
$$E\subseteq D\to E^\omega \subseteq D^\omega \to (E^{\omega})^{\omega}\cdots$$ and using the fact that the maps between the successive ultrapowers of $E$ are elementary (being ultrapowers of the diagonal map) and the maps between the successive ultrapowers of $D$ are elementary (as the initial map $D\to D^\om$ is elementary as $D$ is strongly self-absoring), we see that $D$ and $E$ are elementarily equivalent.  It follows that the map $D\to E^\om$ is elementary (again, since $D$ is strongly self-absorbing), whence $E$ is actually elementary in $D$. 
 Since $D$ is strongly self-absorbing, it is the prime model of its theory, whence so is $E$.  By uniquness of prime models, we see that $D\cong E$.
 
 We point out that this result gives a new proof of the aforementioned result of the authors that any separable, nuclear C$^*$-algebra, existentially closed amongst all \cstar-algebras, is $\ast$-isomorphic to $\O_2$.  This proof has the advantage of only relying on the Kirchberg-Phillips embedding theorem, which states separable nuclear \cstar-algebra embeds in $\O_2$, rather than the more difficult ``$A\otimes \O_2$'' theorem of Kirchberg (as well as the axiomatizability of simple, purely infinite algebras).
\end{rmk}

We conclude this section by making a remark on the \emph{AF-embeddability problem}, which asks whether every element of $\k_{\on}$ embeds into an AF algebra.  (See, for example, \cite[Section 8.5]{BO}.)  By Corollary \ref{AF}, we immediately have:

\begin{cor}
Suppose that $B$ is an element of $\k_{\on}$ that embeds into an element of $\k_{\ons}$ that is in the UCT class and that is existentially closed for $\k$.  Then $B$ is AF-embeddable.
\end{cor}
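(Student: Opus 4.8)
The plan is to reduce the statement to Corollary \ref{AF}, after which AF-embeddability is a purely formal consequence. Let me write $A$ for the element of $\k_{\ons}$ into which $B$ unitally embeds; by hypothesis $A$ lies in the UCT class and is existentially closed for $\k$. The first thing I would do is simply record that $A$ satisfies, word for word, the hypotheses of Corollary \ref{AF}: as a member of $\k_{\ons}$ it is a separable, simple, nuclear, stably finite, unital \cstar-algebra, it is in the UCT class, and it is existentially closed for $\k$.

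Applying Corollary \ref{AF} to $A$, I would conclude that $A$ is AF. Since $B$ embeds into $A$, and $A$ is itself an AF algebra, the given inclusion $B \hookrightarrow A$ already exhibits $B$ as a subalgebra of an AF algebra. That is exactly the assertion that $B$ is AF-embeddable, so the proof is complete at this point. No auxiliary construction (e.g. passing to an ultrapower or to a larger enveloping algebra) is needed, precisely because the algebra witnessing AF-embeddability is $A$ itself.

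There is no genuine obstacle in this corollary: all of the substantive content lives in the proof of Corollary \ref{AF}, namely the production of a faithful trace via \cite{traces}, the identification of the tracial von Neumann closure as hyperfinite through Connes' classification \cite{connes}, and the appeal to \cite{TWW} together with the monotracial Toms--Winter machinery \cite{sww}. The only remaining step is the entirely definitional observation that a subalgebra of an AF algebra is AF-embeddable, which requires no argument beyond unwinding the terminology. Thus the word ``immediately'' preceding the statement is fully warranted.
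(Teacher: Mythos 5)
Your proposal is correct and is precisely the argument the paper intends: the paper derives this corollary ``immediately'' from Corollary \ref{AF} by applying it to the ambient algebra $A\in\k_{\ons}$ to conclude $A$ is AF, whence the given embedding $B\hookrightarrow A$ witnesses AF-embeddability. No further commentary is needed.
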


By relativizing the above concepts to a fixed element $B$ of $\k_{\on}$ and using the preceding corollary, we can derive a sufficient local, finitary criteria for the AF-embeddability problem (for simple algebras).  If $A$ is an element of $\k$ that contains $B$, we say that $A$ has \emph{good simple stably finite nuclear witnesses over $B$} if the algebras $Q_n$ are simple, contain $B$, and the embedding $\rho:B\to \prod_\om Q_n$ restricts to the diagonal embedding on $A$.  


\begin{cor}
Assume that the weak UCT problem has a positive solution.  Suppose that $B\in \k$ is such that every element of $\k$ containing $B$ has good simple stably finite nuclear witnesses over $B$.  Then $B$ is AF-embeddable.
\end{cor}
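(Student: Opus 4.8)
The plan is to feed the relativized witnesses into Robinson forcing over $B$ to build a generic $C\in\k_{\ons}$ that contains $B$, is existentially closed for $\k$, and lies in the UCT class, and then to conclude via Corollary \ref{AF} that $C$ is AF, so that the subalgebra $B$ is AF-embeddable.

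First I would run finite forcing in the language of \cstar-algebras enlarged by constants naming a fixed countable dense subset of $B$, working inside the class of elements of $\k$ that contain $B$. Exactly as in the proof of Theorem \ref{weakconjunction} and \cite[Section 3.2]{kep}, this produces a separable, finitely generic algebra $C$ in this class; the constants are interpreted as a dense subset of a distinguished unital copy of $B$ inside $C$, so $B$ embeds unitally into $C$. The forcing makes $C$ existentially closed for the class-with-constants, but since any extension $D\in\k$ of $C$ automatically contains this copy of $B$ and the elements of $B$ are simply parameters from $C$, existential closedness for the relativized class coincides with existential closedness for $\k$; hence $C$ is existentially closed for $\k$.

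Next I would arrange, through the Omitting Types theorem via Robinson forcing \cite[Corollary 4.7]{forcing}, that $C$ is simultaneously simple and nuclear. Simplicity is secured by omitting the appropriate type, using that it is definable by the uniform family of existential formulae $\varphi_n$ recorded in the remark following Theorem \ref{weakconjunction} together with the simplicity of the witnessing algebras $Q_n$; nuclearity is forced exactly as the good nuclear witnesses are used in Corollary \ref{MF} and \cite[Section 3.2]{kep}, the condition $\ex(\rho_n(F))\to 1$ allowing one to omit the type expressing non-nuclearity. The hypothesis that \emph{every} element of $\k$ containing $B$ has good simple stably finite nuclear witnesses over $B$ is precisely what lets these conditions be met at each finite stage of the forcing while keeping the distinguished copy of $B$ diagonally embedded. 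The outcome is $C\in\k_{\ons}$ with $B\subseteq C$ and $C$ existentially closed for $\k$.

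Finally, the weak UCT problem gives $C'\in\k_{\on}$ in the UCT class with $C\subseteq C'$; since $\k_{\on}\subseteq\k$ and $C$ is existentially closed for $\k$, the embedding $C\hookrightarrow C'$ is existential, so $C$ inherits the UCT by \cite[Theorem 2.10]{szabo}, just as in Theorem \ref{weakconjunction}. Now $C\in\k_{\ons}$ is UCT and existentially closed for $\k$, so Corollary \ref{AF} shows $C$ is AF; as $B\subseteq C$, the algebra $B$ is AF-embeddable. The main obstacle is the forcing-theoretic first part: one must verify that the relativized-over-$B$ version of the omitting-types/Robinson-forcing construction of \cite{kep} goes through verbatim, yielding a generic that is at once simple, nuclear, existentially closed for $\k$, and contains the prescribed $B$. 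Once that is in hand, the transfer of the UCT and the appeal to Corollary \ref{AF} are routine.
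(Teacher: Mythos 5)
Your proposal is correct and follows essentially the same route as the paper: run Robinson forcing in the language expanded by constants for $B$ over the class of elements of $\k$ containing $B$, use the relativized witnesses to make the generic $C$ simple and nuclear, observe that existential closedness for the relativized class gives existential closedness for $\k$, transfer the UCT down from the superalgebra supplied by the weak UCT problem, and conclude via Corollary \ref{AF}. The paper's own proof is just a terser statement of exactly these steps, so your additional detail (e.g.\ the remark that constants from $B$ are merely parameters from $C$, and the explicit appeal to \cite[Theorem 2.10]{szabo}) is a faithful elaboration rather than a different argument.
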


\begin{proof}
The hypotheses allow us to run the model-theoretic forcing machinery in the language expanded by constants to name elements of $B$.  The result is a unital \cstar-algebra $C$ that is existentially closed for the elements of $\k$ that contain $B$ that is also simple and nuclear.  In particular, $C$ is existentially closed for $\k$.  By the positive solution of the weak UCT problem for $\k$, $C$ is also UCT.  It follows that $C$ is AF.
\end{proof}

\section{Traces on stably finite \cstar-algebras}\label{trace}

In this section, we show that a positive solution to the trace problem follows from every stably finite \cstar-algebra admitting good stably finite nuclear witnesses without needing any weak UCT or weak QD assumptions.  The following result is the key technical modification of Haagerup's work that we will need to accomplish this.

\begin{prop}\label{stablyfinitetensor} Suppose that $A$ and $B$ are separable, unital, simple \cstar-algebras, $A$ has good stably finite nuclear witnesses, $B$ is exact, and $\rho = (\rho_n)^\bullet: B\to \prod^\om M_n$ is a $\ast$-embedding of $B$ into a \emph{tracial} ultraproduct of matrix algebras. Then $A\otimes B$ is stably finite.
\end{prop}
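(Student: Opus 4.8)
The plan is to produce a faithful tracial functional on $A \otimes B$ and read off stable finiteness from it. The first point to exploit is that, since $A$ and $B$ are both simple and $B$ is exact, the minimal tensor product $A \otimes B$ is again simple. Consequently it suffices to construct a \emph{single} nonzero quasitrace on $A\otimes B$: any nonzero quasitrace on a simple unital \cstar-algebra is automatically faithful (its trace ideal $\{a : \tau(a^*a)=0\}$ is a proper closed two-sided ideal, hence zero), and a faithful quasitrace forces stable finiteness, since for $v^*v=1$ the identity $\tau(1-vv^*)=\tau(v^*v)-\tau(vv^*)=0$ together with $1-vv^*\ge 0$ gives $vv^*=1$ (and likewise on matrix amplifications). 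Thus the whole content of the proposition is the \emph{existence of a nonzero tracial functional} on $A\otimes B$, and this is exactly where the techniques of \cite{traces} enter.

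For the construction I would combine the two hypotheses as follows. Fixing a finite-dimensional operator system $F\subset A$, the good stably finite nuclear witnesses supply u.c.p.\ maps $\rho_n : A\to \B(H)$ with $(\rho_n)^\bullet$ a $\ast$-embedding and $\ex(\rho_n(F))\to 1$, together with operator-system embeddings $j_n:\rho_n(F)\hookrightarrow Q_n$ into stably finite $Q_n\in\k$. Each $Q_n$ carries a $2$-quasitrace $\sigma_n$ by \cite{blackhand}, extending to all matrix amplifications. On the other side, the tracial embedding $B\hookrightarrow \prod_\om M_p$ provides u.c.p.\ microstate maps $\iota_p:B\to M_p$ that are trace-preserving and asymptotically multiplicative \emph{in the $2$-norm}. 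I would then form the u.c.p.\ maps $(j_n\rho_n)\otimes \iota_p : F\otimes B\to Q_n\otimes M_p = M_p(Q_n)$ (the minimal tensor product behaving well here because $B$ is exact), compose with the extended quasitrace $\sigma_n\otimes \tr_p$ on $M_p(Q_n)$, and pass to an iterated ultralimit over $F$ (directed by inclusion) and over $n,p$. Unitality guarantees that the resulting functional $\Phi$ on $A\otimes B$ is nonzero.

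It remains to verify that $\Phi$ is a quasitrace, and here the two sides behave very differently. The $B$-side is the easy half: because $\tr_p$ is a genuine trace and $\iota_p$ is asymptotically multiplicative in the $2$-norm, the defect $\iota_p(b_1b_2)-\iota_p(b_1)\iota_p(b_2)$ is negligible in $2$-norm, and it is absorbed by the elementary bound $|(\sigma_n\otimes\tr_p)(q\otimes m)|\le \|q\|\,\|m\|_{2,\tr_p}$ valid for the extended quasitrace. The $A$-side is the serious half: since $j_n$ is merely a complete order embedding it destroys products and commutators, so the naive pullback $\sigma_n\circ j_n\circ \rho_n$ need not be tracial even though $\rho_n$ itself is asymptotically multiplicative. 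Overcoming this is precisely the point at which Haagerup's argument from \cite{traces} must be adapted: one uses $\ex(\rho_n(F))\to 1$ to force the relevant completely positive maps to become multiplicative in the trace $2$-norm, via a Powers--St\o rmer-type variational estimate and the triviality of quasitraces on finite-dimensional algebras, so that images of commuting (and almost-commuting) elements of $A$ are seen tracially by $\sigma_n$ in the limit.

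I expect this last step --- establishing $A$-side traciality by controlling the multiplicativity defect of the operator-system witnesses against the quasitraces $\sigma_n$ --- to be by far the hardest part, along with the bookkeeping needed to run a quasitrace (rather than a genuine trace, as the $Q_n$ need not be exact) through the operator-system-to-algebra passage and the iterated ultralimit. Once traciality is secured, simplicity of $A\otimes B$ upgrades $\Phi$ to a faithful quasitrace, and faithfulness yields stable finiteness, completing the argument.
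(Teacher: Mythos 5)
Your reduction at the end (a nonzero quasitrace on the simple algebra $A\otimes B$ is automatically faithful, and a faithful quasitrace forces stable finiteness) is fine, but the construction of that quasitrace --- which you correctly identify as the entire content --- has a genuine gap, and I do not believe the route you sketch can be pushed through. The obstruction is that quasitraces are \emph{nonlinear} functionals and therefore do not pull back along u.c.p.\ maps: the composition $\sigma_n\circ j_n\circ\rho_n$ need not even be additive on commuting self-adjoint elements of $A$ (commuting $a,b\in A$ have images $j_n\rho_n(a)$, $j_n\rho_n(b)$ that need not commute, or even approximately commute, in $Q_n$, so the partial additivity of $\sigma_n$ is unavailable), let alone satisfy $\tau(xx^*)=\tau(x^*x)$ in the limit. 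Moreover, $\ex(\rho_n(F))\to 1$ says only that $F_n$ is close in cb-Banach--Mazur distance to a \emph{matricial operator system}; it does not make the order embeddings $j_n$ approximately multiplicative in any norm, so the variational estimate you invoke for the ``$A$-side'' has nothing to bite on. This is precisely the difficulty that Haagerup's machinery is designed to circumvent, not a bookkeeping issue to be deferred.

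The paper's proof avoids constructing any functional on $A\otimes B$ at all. It first builds, for each $k$, a \emph{finite AW$^*$-algebra} $N_k:=\prod^\om(Q_k\otimes M_n)$, the ultraproduct taken with respect to the quasitrace ideal as in \cite[Proposition 4.2 and Lemma 5.6]{traces}; the estimate $\ex(F_k)\le 1+1/k$ is used only to bound $\|\tilde\rho_k\|_{\cb}$ for the induced map $F_k\otimes B\to N_k$, and the exactness of $B$ only to make $F\otimes B\to\prod_\om(F_k\otimes B)$ completely isometric. The resulting ultralimit is a u.c.p.\ map, defined on an operator system spanned by unitaries generating $A\otimes B$, which carries unitaries to unitaries; Pisier's linearization trick then upgrades it to an honest $*$-homomorphism $A\otimes B\to\prod_{\om'}N_k$, which is injective by simplicity of $A\otimes B$ (a fact that, incidentally, needs no exactness: minimal tensor products of simple algebras are simple). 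Stable finiteness is then simply inherited from the stably finite target. In short: where you try to transport a quasitrace backwards along completely positive maps, the paper transports the algebra forwards along a $*$-homomorphism into something already known to be stably finite; only the second of these is actually available from the given hypotheses.
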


\begin{proof} 
Fix an operator subsystem $F$ of $A$ spanned by finitely many unitaries. Let $F_k\subset Q_k$ be a sequence of pairs of operator systems and stably finite C$^*$-algebras as guaranteed by good stably finite nuclear witnesses for $F\subset A$ such that ${\rm ex}(F_k)\leq 1+1/k$.  For each $k$, we have a finite AW$^*$-algebra $N_k:=\prod^\om (Q_k\otimes M_n)$, where the ultraproduct is taken with respect to the quasitrace ideal; see \cite[Proposition 4.2 and Lemma 5.6]{traces}.  It follows from \cite{pis} that $\tilde\rho_k: F_k\otimes B\to N_k$ satisfies $\|\tilde\rho_k\|_{\cb}\leq 1+1/k$.
By the exactness of $B$, $\theta: F\otimes B\to \prod_\om (F_k \otimes B)$ is a $\cb$-isometry \cite[Theorem 17.7]{pis}. We now easily see that $\rho := (\tilde\rho_k)^\bullet\circ \theta: F\otimes B\to \prod_\om N_k$ is a u.c.p.\ map which maps unitaries to unitaries.  By considering larger finite sets of unitaries there is a u.c.p.\ map $\rho': F'\otimes B\to \prod_{\om'} N_k$ which maps unitaries to unitaries where $F'\otimes B$ is an operator system which generates $A\otimes B$ as C$^*$-algebra. Pisier's linearization trick \cite[Proposition 13.6]{pis} states that we can extend any such u.c.p.\ map $\rho'$ to a $*$-homomorphism $A\otimes B\to \prod_{\om'} N_k$.  Since $A\otimes B$ is simple, the aforementioned $*$-homomorphism is injective, whence we have the desired conclusion since $\prod_{\om'} N_k$ is stably finite.
\end{proof}

We need the following fact, which is \cite[Theorem 2.4]{traces}:

\begin{fact}\label{hasatrace}
For an arbitrary unital \cstar-algebra $A$, $A$ has a trace if and only if $A\otimes C_r^*(\mathbb{F}_\infty)$ is stably finite.
\end{fact}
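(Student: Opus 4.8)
The plan is to prove the biconditional by treating the two implications separately, with the reverse implication (stable finiteness of the tensor product forces a trace on $A$) carrying essentially all of the content. The forward direction I would handle directly. The reduced free group \cstar-algebra $C_r^*(\mathbb{F}_\infty)$ carries its canonical faithful trace $\tau_G$, namely the vector state at the identity of $\mathbb{F}_\infty$ in the left regular representation. Given a faithful trace $\tau$ on $A$ — which is automatic from any nonzero trace when $A$ is simple, the case relevant to the applications — the functional $\tau\otimes\tau_G$ extends to a trace on the minimal tensor product $A\otimes C_r^*(\mathbb{F}_\infty)$, and it is again faithful because the minimal tensor product of the two faithful GNS representations is faithful. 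Amplifying, $\tau\otimes\tau_G\otimes\operatorname{tr}_n$ is a faithful trace on $M_n(A\otimes C_r^*(\mathbb{F}_\infty))$ for every $n$, and a unital \cstar-algebra carrying a faithful trace is finite; hence $A\otimes C_r^*(\mathbb{F}_\infty)$ is stably finite. (A non-faithful trace does not suffice, as $\mathbb{C}\oplus\O_2$ shows, which is precisely why faithfulness — automatic under simplicity — is needed here.)

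For the reverse direction I would proceed in three stages. First, since $A\otimes C_r^*(\mathbb{F}_\infty)$ is stably finite, the dimension-function and finite-AW$^*$ machinery of \cite{blackhand,traces} — the same quasitrace-ideal ultraproduct construction used in Proposition \ref{stablyfinitetensor} — produces a normalized $2$-quasitrace $Q$ on $A\otimes C_r^*(\mathbb{F}_\infty)$. Second, restricting $Q$ along the unital inclusions yields a quasitrace $q(a):=Q(a\otimes 1)$ on $A$ and a quasitrace $Q(1\otimes\,\cdot\,)$ on $C_r^*(\mathbb{F}_\infty)$; since $C_r^*(\mathbb{F}_\infty)$ is exact with a unique trace, the latter is forced to be $\tau_G$. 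At this point $q$ is a quasitrace with $q(1)=1$, and the only remaining task is to promote it to a genuine trace.

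The crux is exactly this promotion, and here I would exploit the free Haar unitaries $u_1,u_2,\ldots$ generating $C_r^*(\mathbb{F}_\infty)$. For self-adjoint $a,b\in A$ one uses the $u_i$ to move $a\otimes 1$ and (a unitary conjugate of) $b\otimes 1$ into \emph{free relative position} inside $A\otimes C_r^*(\mathbb{F}_\infty)$; then a computation with $Q$ — exploiting that quasitraces are unitarily invariant, homogeneous, linear on commuting elements, and almost linear on almost commuting elements \cite{blackhand}, together with the freeness relations, which supply enough room to pin down additivity across a free family — forces $q(a+b)=q(a)+q(b)$. A quasitrace that is additive on self-adjoints is linear, hence a trace, so $q$ is the desired tracial state on $A$.

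The main obstacle is precisely this third stage: extracting linearity of $q$ from the \emph{global} quasitrace $Q$ on $A\otimes C_r^*(\mathbb{F}_\infty)$ by means of freeness. It is essential that the argument use $Q$ on the whole tensor product rather than $q$ alone, since otherwise one would have shown that every quasitrace on every \cstar-algebra is a trace, which fails for non-exact algebras; so the free structure of $C_r^*(\mathbb{F}_\infty)$ must be genuinely interleaved with the quasitrace calculus on the product. This is the mechanism underlying Haagerup's theorem that quasitraces on exact \cstar-algebras are traces \cite{traces}, and it is exactly what allows one to conclude that $A$ itself has a trace even though $A$ need not be exact.
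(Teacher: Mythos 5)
The paper gives no proof of this statement at all: it is imported verbatim from Haagerup \cite{traces} (Theorem 2.4 there), so the only meaningful comparison is with Haagerup's own argument. Your forward direction is essentially correct and in fact sharper than the printed statement: as your $\mathbb{C}\oplus\mathcal{O}_2$ example shows, a non-faithful trace on $A$ does not force $A\otimes C^*_r(\mathbb{F}_\infty)$ to be finite, so the ``only if'' implication as literally stated needs either a faithful trace or a reformulation in terms of proper infiniteness. Since the paper only ever invokes the other implication (stable finiteness of the tensor product yields a trace, via Proposition \ref{stablyfinitetensor}), this looseness is harmless, but you were right to flag it.

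The reverse direction is where your proposal has a genuine gap, and the failure is concrete. Stage 3 asks the free Haar unitaries to move $a\otimes 1$ and $b\otimes 1$ into ``free relative position,'' but $1\otimes u_i$ commutes with everything in $A\otimes 1$, so conjugating $b\otimes 1$ by $1\otimes u_i$ returns $b\otimes 1$: freeness living in the second tensor factor cannot create any relative position between elements of the first factor. Even setting this aside, there is no usable notion of free independence with respect to a quasitrace $Q$ (freeness is defined by moment identities, and $Q$ is not linear), and no computation is offered that extracts additivity of $q$ from $Q$; what Stage 3 really demands is a proof that a quasitrace on the arbitrary, possibly non-exact, algebra $A$ is linear --- an instance of the Kaplansky problem, which is strictly harder than the statement being proved and is precisely what Haagerup's exactness hypothesis is there to circumvent. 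Haagerup's actual proof of this direction involves no quasitraces: one argues the contrapositive, using the Cuntz--Pedersen/Hahn--Banach characterization that $A$ has no tracial state if and only if there exist $a_1,\dots,a_n\in A$ with $\sum a_i^*a_i=1$ and $\|\sum a_ia_i^*\|<1$ (and arbitrarily small after iterating), and then norm estimates for $T=\sum a_i\otimes u_i$, with $u_i$ free Haar unitaries, produce a non-unitary isometry (indeed proper infiniteness) in a matrix algebra over $A\otimes C^*_r(\mathbb{F}_\infty)$. That is the argument to reproduce; your Stages 1 and 2 are a detour into much deeper water, and Stage 2's appeal to the main theorem of \cite{traces} imports a far harder result than the Fact itself.
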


The following follows immediately from the Proposition \ref{stablyfinitetensor} and Fact \ref{hasatrace}.

\begin{cor}
If $A$ is unital, simple, and has good stably finite nuclear witnesses, then $A$ has a trace.
\end{cor}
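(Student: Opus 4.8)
The plan is to apply Proposition \ref{stablyfinitetensor} and Fact \ref{hasatrace} with the single well-chosen auxiliary algebra $B = C_r^*(\mathbb{F}_\infty)$, the reduced group \cstar-algebra of the free group on infinitely many generators. First I would record that this $B$ meets every hypothesis placed on the second algebra in Proposition \ref{stablyfinitetensor}: it is separable and unital by construction; it is simple by Powers' theorem (the reduced \cstar-algebra of a nonabelian free group is simple); and it is exact since free groups are exact.

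The one point deserving a sentence of justification is that $B$ admits a $\ast$-embedding $\rho = (\rho_n)^\bullet : B \to \prod^\om M_n$ into a \emph{tracial} ultraproduct of matrix algebras. This follows from Voiculescu's random matrix model for free Haar unitaries (equivalently, from the Connes-embeddability of $L(\mathbb{F}_\infty)$, i.e.\ its embedding into the tracial ultrapower $R^\om$, together with the inclusion $C_r^*(\mathbb{F}_\infty) \subseteq L(\mathbb{F}_\infty)$): the free generators are realized asymptotically by independent Haar-distributed unitary random matrices, giving the required trace-preserving embedding. This is unconditional and does not rely on the (now-refuted) general Connes embedding conjecture.

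With $B$ so chosen, Proposition \ref{stablyfinitetensor} applies directly to the pair $(A,B)$ — here $A$ is unital, simple, and has good stably finite nuclear witnesses by hypothesis — and yields that $A \otimes C_r^*(\mathbb{F}_\infty)$ is stably finite. Fact \ref{hasatrace} then immediately converts this into the conclusion that $A$ has a trace, which completes the argument.

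I do not anticipate a genuine obstacle: the corollary is essentially the specialization of Proposition \ref{stablyfinitetensor} to the Haagerup test algebra $C_r^*(\mathbb{F}_\infty)$, and the only content beyond citing the two earlier results is the routine verification that this fixed algebra is simple, exact, and tracially matricially embeddable. The mild subtlety worth flagging is that the embedding into the matrix ultraproduct must land in the tracial (rather than the norm) ultraproduct, which is precisely what the microstate construction supplies.
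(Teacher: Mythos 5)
Your proposal is correct and matches the paper's argument: the paper derives this corollary ``immediately'' from Proposition \ref{stablyfinitetensor} and Fact \ref{hasatrace}, which is precisely your specialization to $B = C_r^*(\mathbb{F}_\infty)$. Your explicit verification that $C_r^*(\mathbb{F}_\infty)$ is separable, unital, simple, exact, and tracially embeddable into $\prod^\om M_n$ (via the embedding $C_r^*(\mathbb{F}_\infty)\subseteq L(\mathbb{F}_\infty)\hookrightarrow \R^\om$) is exactly the content the paper leaves implicit, and it is the same route Haagerup's work takes.
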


\begin{cor}
If every stably finite \cstar-algebra has good stably finite nuclear witnesses, then every stably finite \cstar-algebra has a trace.  
\end{cor}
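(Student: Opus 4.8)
The plan is to reduce the general (possibly non-simple) case to the construction underlying Proposition \ref{stablyfinitetensor}, but to extract a \emph{genuine} trace through Fact \ref{hasatrace} rather than through a quasitrace. First I would reduce to separable algebras. Recall that a unital \cstar-algebra $A$ has a tracial state if and only if the unit lies at distance exactly $1$ from the linear span of the additive commutators $xy-yx$; hence if $A$ has no trace, this is witnessed by a single finite sum of commutators lying within distance $<1$ of $1$, and the finitely many elements involved generate a separable unital subalgebra $A_0$ of $A$ which then also has no trace. As a unital subalgebra of the stably finite algebra $A$, such an $A_0$ is itself stably finite, so $A_0\in\k$ and has good stably finite nuclear witnesses by hypothesis. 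Thus it suffices to produce a trace on an arbitrary separable, unital, stably finite $A$ admitting good stably finite nuclear witnesses.

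Next I would take $B:=C_r^*(\mathbb{F}_\infty)$, which is separable, simple, exact, and admits a \cstar-embedding into a tracial ultraproduct $\prod^\om M_n$ by hyperlinearity of free groups. Running the construction in the proof of Proposition \ref{stablyfinitetensor} verbatim with this $A$ and $B$ produces a \emph{unital} $\ast$-homomorphism $\pi:A\otimes B\to\prod_{\om'}N_k$ into the finite AW$^*$-algebra $\prod_{\om'}N_k$. The crucial point is that simplicity of $A$ enters that proof only in its final sentence, where it is used to pass from the existence of $\pi$ to the injectivity of $\pi$; every step up to and including Pisier's linearization trick goes through for arbitrary unital $A$ (the unitaries of $A$ span a dense subspace, so finite-dimensional operator systems $F'$ spanned by finitely many unitaries generate $A\otimes B$).

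Since $\pi$ need not be injective once $A$ is not simple, I would then pass to a quotient. Set $J:=\ker(\pi|_{A\otimes 1})\trianglelefteq A$. Because $B$ is simple and exact, the closed ideals of $A\otimes B$ are precisely those of the form $I\otimes B$ with $I\trianglelefteq A$; a short check using $\pi(a\otimes b)=\pi(a\otimes 1)\pi(1\otimes b)$ then identifies $\ker\pi=J\otimes B$, so that, again by exactness of $B$,
$$(A\otimes B)/\ker\pi\;\cong\;(A/J)\otimes B$$
embeds into the finite algebra $\prod_{\om'}N_k$ and is therefore stably finite. Applying Fact \ref{hasatrace} to the unital \cstar-algebra $A/J$ shows that $A/J$ has a tracial state, and composing it with the quotient map $A\to A/J$ yields a tracial state on $A$. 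Here $J$ is proper since $\pi$ is unital, so $A/J\neq 0$, and the reduction of the first paragraph then delivers a trace on every stably finite \cstar-algebra.

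The main obstacle is exactly the non-injectivity of $\pi$ in the non-simple case. One cannot merely pull back the faithful quasitrace on $\prod_{\om'}N_k$ along $\pi$: that would only yield a quasitrace on $A$ (which already exists, as $A$ is stably finite) rather than a genuine trace, since such a shortcut would amount to bypassing Kaplansky's quasitrace problem. The resolution is to route the genuine trace through the simple, exact tensor factor $B=C_r^*(\mathbb{F}_\infty)$ and Haagerup's theorem (Fact \ref{hasatrace}) applied to the quotient $A/J$. Consequently, the step demanding the most care is the ideal-structure computation identifying $\ker\pi$ with $J\otimes B$, together with the attendant exactness-based isomorphism $(A\otimes B)/(J\otimes B)\cong(A/J)\otimes B$.
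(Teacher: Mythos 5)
Your proof is correct, and it in fact supplies an argument that the paper omits: the paper derives only the simple case from Proposition \ref{stablyfinitetensor} and Fact \ref{hasatrace}, and then states the general corollary without further comment, even though simplicity of $A$ is used essentially in the Proposition (namely, to get injectivity of the $\ast$-homomorphism $\pi:A\otimes B\to\prod_{\om'}N_k$ produced by Pisier's linearization trick, and hence stable finiteness of $A\otimes B$ itself). Your two additions are exactly what is needed to cover the non-simple case: the Cuntz--Pedersen-style reduction to separable unital subalgebras (a unital subalgebra of a stably finite algebra is stably finite, and failure of the trace condition is witnessed by finitely many elements), and the passage to the quotient $A/J$ with $J=\ker(\pi|_{A\otimes 1})$. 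You are also right that simplicity of $A$ enters the proof of Proposition \ref{stablyfinitetensor} only in its final sentence, so the unital $\ast$-homomorphism $\pi$ exists for arbitrary unital $A$ with good stably finite nuclear witnesses. The one step that deserves an explicit citation is the assertion that for $B$ simple and exact every closed ideal of $A\otimes B$ has the form $I\otimes B$; this is a theorem of Kirchberg (exactness implies property (C), whence $\operatorname{Prim}(A\otimes B)\cong\operatorname{Prim}(A)\times\operatorname{Prim}(B)$; see also Blanchard--Kirchberg), and it applies to $B=C_r^*(\mathbb{F}_\infty)$, which is indeed simple, exact, and embeddable in a tracial ultraproduct of matrix algebras. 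Granting that, the identification $\ker\pi=J\otimes B$, the isomorphism $(A\otimes B)/(J\otimes B)\cong (A/J)\otimes B$ coming from exactness, the stable finiteness of $(A/J)\otimes B$ as a unital subalgebra of the finite algebra $\prod_{\om'}N_k$, and the application of Fact \ref{hasatrace} to $A/J$ followed by pulling the trace back along $A\to A/J$ are all sound. In short: you use the same basic machinery as the paper (good witnesses feeding Haagerup's finite AW$^*$-algebra construction, then Fact \ref{hasatrace}), but you add a genuine and necessary step to handle non-simple $A$ that the paper glosses over.
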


\begin{rmk}
If Question \ref{jepsf} has a positive answer, then there is a shorter model-theoretic proof of the previous corollary.  Indeed, by the assumption that every stably finite \cstar-algebra has good stably finite nuclear witnesses together with the fact that $\k$ has the joint embedding property, we have that there is a nuclear, stably finite \cstar-algebra $A$ that is locally universal for $\k$.  By the main result of \cite{traces}, $A$ has a trace.  It follows that $A^\omega$ has a trace, whence so does every stably finite \cstar-algebra, being a subalgebra of $A^\omega$.
\end{rmk}

First, recall the definition of a quasitrace from Definition \ref{defqt} above. It follows from \cite{handelman, blackhand} that all stably finite \cstar-algebras admit a quasitrace.  A famous question of Kaplansky asks whether or not every quasitrace on a stably finite \cstar-algebra is  linear, that is, is actually a trace.  By \cite[section II]{blackhand}, this question reduces to the question of whether every AW$^*$ II$_1$-factor is a II$_1$-factor.  It follows that a positive answer to the Kaplansky question implies that all stably finite \cstar-algebras admit a trace.  The main result of \cite{traces} is that every quasitrace on an exact \cstar-algebra is a trace.

Let $\k_\tau$ denote the universally axiomatizable class of structures $(A,\tau)$, where $A\in \k$ and $\tau$ is a quasitrace on $A$.  Given $(A,\tau)\in \k$, we say that $(A,\tau)$ has \emph{good quasitracial stably finite nuclear witnesses} if, in the definition of good stably finite nuclear witnesses, the stably finite algebras $Q_n$ can be equipped with quasitraces $\tau_n$ such that $\rho|_{C^*(F)}: C^*(F)\to \prod_{\omega} Q_n$ is quasitrace-preserving.
By further applying the reasoning of \cite[Lemma 5.10 and Theorem 5.11]{traces} we obtain:

\begin{cor} If $A$ is unital and simple, $\tau$ is an extremal quasitrace on $A$, and $(A,\tau)$ admits good quasitracial stably finite nuclear witnesses, then $\tau$ is a trace.
\end{cor}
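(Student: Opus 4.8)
The plan is to adapt the construction in the proof of Proposition~\ref{stablyfinitetensor}, now carrying the quasitrace data along, in order to force linearity of $\tau$ by comparison with genuine matricial traces. We may assume $A$ is infinite-dimensional, the finite-dimensional case being immediate. Since $A$ is simple, the nonzero quasitrace $\tau$ is faithful, and since $\tau$ is extremal the associated AW$^*$-completion $M_\tau$ is a II$_1$ AW$^*$-\emph{factor}; the goal is precisely to show that $\tau$ is linear, equivalently that $M_\tau$ is a genuine von Neumann factor. As quasitraces are norm-continuous \cite{blackhand}, it suffices to prove that $\tau$ restricts to a trace on $C^*(F)$ for an increasing sequence of finite-dimensional operator systems $F$, each spanned by finitely many unitaries, whose union is dense in $A$.

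Fix such an $F$. Good quasitracial stably finite nuclear witnesses supply stably finite algebras $Q_k$ carrying quasitraces $\tau_k$, operator systems $F_k\subset Q_k$ with $\ex(F_k)\le 1+1/k$, and a quasitrace-preserving $\ast$-embedding $\rho=(\rho_k)^\bullet\colon C^*(F)\to\prod_\om Q_k$ into the quasitracial ultraproduct. Exactly as in Proposition~\ref{stablyfinitetensor}, I would pass to the finite AW$^*$-algebras $N_k:=\prod^\om (Q_k\otimes M_n)$ formed with respect to the quasitrace ideal and invoke Pisier's theory \cite{pis}: the bound $\ex(F_k)\to 1$ realizes each $F_k$ cb-almost-isometrically inside a matricial operator system. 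The decisive point is that on matrix algebras the unique quasitrace coincides with the normalized trace and is therefore linear, so near-matriciality of $F_k$ lets one compare $\tau_k|_{F_k}$ with an honest matricial trace up to an error governed by $\ex(F_k)-1$.

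With this matricial comparison in hand, the next step is to import the reasoning of \cite[Lemma 5.10 and Theorem 5.11]{traces}, where the role played there by exactness---which furnishes a quasitrace-preserving embedding into a matrix ultraproduct---is now played by the good quasitracial witnesses together with $\ex(F_k)\to 1$. Concretely, additivity of the matricial traces across the finitely many unitaries spanning $F$, transported back through the cb-approximations with vanishing error, should yield additivity of $\tau$ on $C^*(F)$. Letting $F$ exhaust $A$ and again using norm-continuity of $\tau$, we conclude that $\tau$ is a trace.

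The main obstacle is the quantitative transport of the quasitrace through the near-matricial operator systems: one must verify that $\tau_k$, restricted to the almost-matricial $F_k$, is genuinely close to a matricial trace with error controlled by $\ex(F_k)-1$, and that Haagerup's estimates survive the substitution of ``exact algebra plus matrix embedding'' by ``good quasitracial stably finite nuclear witnesses plus $\ex(F_k)\to1$.'' This is exactly where extremality of $\tau$ enters, ensuring the AW$^*$-\emph{factor} structure on which Haagerup's factor-level comparison arguments rely.
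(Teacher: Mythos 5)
Your proposal takes essentially the same route as the paper: the paper's entire proof of this corollary is the single remark that it follows ``by further applying the reasoning of \cite[Lemma 5.10 and Theorem 5.11]{traces}'' to the construction already carried out in Proposition \ref{stablyfinitetensor}, which is exactly the adaptation you describe. The one point I would flag is that your intermediate claim that $\tau_k|_{F_k}$ lies within $\ex(F_k)-1$ of a matricial trace does not follow from the cb-estimate alone (a near-complete-order isomorphism of $F_k$ onto a matricial system carries no information about the quasitrace $\tau_k$ of the ambient $Q_k$); in Haagerup's scheme the near-matriciality serves only to build the u.c.p.\ map into the finite AW$^*$-ultraproduct $\prod^\om(Q_k\otimes M_n)$, with linearity of $\tau$ then extracted from the AW$^*$-factor structure, though since the paper supplies no further detail your sketch is at least as explicit as the source.
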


\begin{cor}\label{quasitracesaretraces}
If every element of $\k_\tau$ has good quasitracial stably finite nuclear witnesses, then every quasitrace on a stably finite \cstar-algebra is a trace.
\end{cor}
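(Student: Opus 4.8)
The plan is to derive the general statement from the preceding corollary (the simple, extremal case) by two soft reductions, while keeping its factor-theoretic core to handle non-simple algebras. First I reduce to separable algebras. If a quasitrace $\tau$ on a stably finite \cstar-algebra $A$ fails to be a trace, then $\tau(a+b)\neq\tau(a)+\tau(b)$ for some self-adjoint $a,b\in A$, and $A_0:=C^*(a,b,1)$ is a separable, unital subalgebra which is again stably finite (a unital subalgebra of a stably finite algebra is stably finite) and on which $\tau$ restricts to a quasitrace that is still not a trace; thus we may assume $A\in\k$. Next I reduce to extremal quasitraces by Choquet theory: for $A\in\k$ the normalized quasitraces form a metrizable Choquet simplex (see \cite{blackhand}), so any $\tau$ is the barycenter of a probability measure $\mu$ supported on the extreme points. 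Since linearity survives barycentric integration — if every $\sigma\in\supp\mu$ is a trace then $\tau(a+b)=\int\sigma(a+b)\,d\mu=\int(\sigma(a)+\sigma(b))\,d\mu=\tau(a)+\tau(b)$ — it suffices to prove that every extremal quasitrace on an element of $\k$ is a trace.

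So fix an extremal quasitrace $\tau$ on $A\in\k$; then $(A,\tau)\in\k_\tau$, so by hypothesis $(A,\tau)$ has good quasitracial stably finite nuclear witnesses. Extremality lets us bypass the simplicity hypothesis of the preceding corollary, whose role there is to single out a single finite factor in which to perform comparison: by \cite{blackhand} the finite AW$^*$-algebra $M$ associated to $(A,\tau)$ is a \emph{factor}, and by the Kaplansky reduction recorded above, $\tau$ is a trace as soon as $M$ is a genuine von Neumann factor. To establish the latter I would feed the witnesses $F_k\subset Q_k$, with their compatible quasitraces $\tau_k$, into the construction of \cite[Proposition 4.2 and Lemma 5.6]{traces}: the quasitrace-preserving maps $C^*(F)\to\prod_\om Q_k$, together with the finite AW$^*$-ultraproducts $N_k=\prod^\om(Q_k\otimes M_n)$ and the near-matriciality $\ex(F_k)\to 1$, embed the matricial data of $M$ into finite von Neumann ultraproducts of matrix algebras. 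Exactly as in the preceding corollary — that is, following \cite[Lemma 5.10 and Theorem 5.11]{traces} — this forces $M$ to be injective, hence hyperfinite by Connes \cite{connes}, hence a von Neumann II$_1$ (or finite type I) factor; pulling back its linear trace along the canonical map $A\to M$ shows that $\tau$ is a trace.

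The main obstacle is precisely this last upgrade of the AW$^*$-factor $M$ to an honest von Neumann factor, which is the entire difficulty isolated by the Kaplansky reduction and by Haagerup's analysis. The delicate point is that one must transfer finiteness and injectivity through the quasitracial ultraproduct construction, at a stage where no honest trace is yet available; this is exactly where the nuclearity encoded in the good quasitracial witnesses (via $\ex(F_k)\to 1$) is indispensable, and it is the step that must be imported from the reasoning behind the preceding corollary. By contrast, the reduction to separable algebras and the Choquet reduction to extremal quasitraces are routine.
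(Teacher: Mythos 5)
Your overall strategy is the right one and is faithful to what the paper intends: the paper states this corollary with no proof at all, as an immediate consequence of the preceding (simple, extremal) corollary together with the standard reductions from Haagerup's argument, and your first two steps --- passing to the separable unital subalgebra $C^*(a,b,1)$ (noting that restrictions of quasitraces are quasitraces and that unital subalgebras of stably finite algebras are stably finite), and then using that the normalized quasitraces form a metrizable Choquet simplex to reduce to extremal ones --- are exactly those reductions. You are also right to flag that the simplicity hypothesis of the preceding corollary does not cover the general extremal case, and that extremality (i.e., factoriality of the associated finite AW$^*$-algebra and uniqueness of its quasitrace) is the correct substitute; this is consistent with \cite[Theorem 5.11]{traces}, which never assumes simplicity.

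The one step that does not work as written is your conclusion of the extremal case: ``this forces $M$ to be injective, hence hyperfinite by Connes, hence a von Neumann factor.'' Connes' theorem is a statement about von Neumann algebras; it cannot be applied to the AW$^*$-factor $M$ to conclude that $M$ \emph{is} a von Neumann algebra --- that upgrade is precisely the Kaplansky-type difficulty at stake, so the step is circular as stated (and it is in any case unclear why $M$ would be injective, since $A$ is only assumed to have good witnesses, not to be nuclear). The mechanism you should import from \cite[Lemma 5.10]{traces} is different and more elementary: the quasitracial witnesses, fed through the finite AW$^*$-ultraproducts $N_k$, yield u.c.p.\ maps $\varphi_k$ from $A$ into algebras carrying honest (linear) traces such that $\operatorname{tr}\circ\varphi_k\to\tau$ pointwise, the identification of the limit with $\tau$ being where extremality and uniqueness of the quasitrace on a finite factor enter; since each $\operatorname{tr}\circ\varphi_k$ is linear, so is $\tau$. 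With that substitution your outline becomes a correct fleshing-out of the paper's (unwritten) proof.
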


We now show that quasitraces can be put into a model-theoretic framework.  In order to do this, we first recall that every quasitrace on a unital \cstar-algebra is Lipshitz continuous with Lipshitz constant $\sqrt{2}$ by \cite[Corollary II.2.5(iii)]{blackhand}.  Thus, we may consider the language of unital \cstar-algebras augmented by a new predicate symbol $\tau$ whose modulus of uniform continuity is $\Delta_\tau(\epsilon)=\epsilon/\sqrt{2}$.

\begin{prop}
The class of structures $(A,\tau)$, where $A$ is a unital \cstar-algebra and $\tau$ is a quasitrace on $A$, is universally axiomatizable.
\end{prop}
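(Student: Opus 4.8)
The plan is to argue semantically rather than to write down explicit axioms. In continuous logic a class of $L$-structures closed under isomorphism is axiomatizable by universal conditions precisely when it is closed under substructures and under ultraproducts: indeed, closure under substructures applied to the diagonal embedding $M\hookrightarrow M^\om$ gives closure under ultraroots, so together with closure under ultraproducts the class is elementary, and an elementary class closed under substructures is universal (see the discussion in \cite{munster}). Here $L$ is the language of unital \cstar-algebras together with the predicate $\tau$ carrying the modulus $\Delta_\tau(\e)=\e/\sqrt2$ fixed above (note that $|\tau(x)|\le\sqrt2\|x\|$, so $\tau$ is bounded on each ball and is a legitimate predicate). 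It therefore suffices to verify the two closure properties against Definition \ref{defqt}, and the only clause needing genuine attention will be the existence of the companion $\tau_2$ in (4).

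\emph{Substructures.} Let $(A,\tau)$ be a quasitrace and $B\subseteq A$ a unital \cstar-subalgebra. Clauses (1)--(3) are quantified over the elements of the structure and so restrict verbatim to $(B,\tau|_B)$. For (4), take the witness $\tau_2$ on $M_2(A)$; since $M_2(B)$ is a unital subalgebra of $M_2(A)$, the restriction $\tau_2|_{M_2(B)}$ satisfies (1)--(3) for the same reason, and $\tau_2|_{M_2(B)}(x\otimes e_{11})=\tau(x)=\tau|_B(x)$ for all $x\in B$, so $\tau_2|_{M_2(B)}$ witnesses (4) for $(B,\tau|_B)$.

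\emph{Ultraproducts.} Let $(A_i,\tau_i)$ be quasitraces and $A=\prod_\om A_i$. Since each $\tau_i$ is $\sqrt2$-Lipschitz by \cite[Corollary II.2.5(iii)]{blackhand}, setting $\tau([a_i]):=\lim_\om\tau_i(a_i)$ defines a $\sqrt2$-Lipschitz predicate on $A$, and clauses (1)--(3) pass to $\tau$ by taking ultralimits of the defining (in)equalities, using the Lipschitz bound to accommodate the commutativity hypothesis in (3). For (4) I use the canonical $\ast$-isomorphism $M_2(A)\cong\prod_\om M_2(A_i)$, valid because tensoring by the fixed finite-dimensional algebra $M_2$ commutes with ultraproducts. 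Each companion $(\tau_i)_2$ on $M_2(A_i)$ is again $\sqrt2$-Lipschitz by the same bound applied to the \cstar-algebra $M_2(A_i)$, so $\tau_2([X_i]):=\lim_\om(\tau_i)_2(X_i)$ is a well-defined predicate on $M_2(A)$ satisfying (1)--(3), and passing to the ultralimit in $\tau_i(x)=(\tau_i)_2(x\otimes e_{11})$ yields $\tau(x)=\tau_2(x\otimes e_{11})$. Hence $(A,\tau)$ is a quasitrace.

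The crux is clause (4), which is existential in form (it asserts the existence of $\tau_2$) and is thus not obviously stable under either operation. Both preservations rest on the finite-dimensionality of $M_2$: it guarantees that $M_2(-)$ both restricts to subalgebras and commutes with ultraproducts, so that the witness $\tau_2$ can be carried along with the structure, while the uniform $\sqrt2$-Lipschitz bound from \cite{blackhand} ensures that in the ultraproduct case the transported witness is an honest uniformly continuous predicate. With both closure properties established, the semantic characterization yields that the class is universally axiomatizable.
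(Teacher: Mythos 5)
Your semantic strategy --- show the class is closed under substructures and ultraproducts and invoke the continuous-logic preservation theorem --- is legitimate and genuinely different from the paper's proof, which writes down explicit universal axioms clause by clause. However, there is a real gap at the one point where the actual work lies: closure of clause (3) under ultraproducts. If $a=[a_i]$ and $b=[b_i]$ are commuting self-adjoint elements of $\prod_\omega A_i$, the representatives $a_i,b_i$ need not commute; all you know is that $\|a_ib_i-b_ia_i\|\to 0$ along $\omega$. The $\sqrt2$-Lipschitz bound, which is what you invoke to ``accommodate the commutativity hypothesis,'' does not bridge this: it would only help if you could perturb the $a_i,b_i$ to \emph{exactly} commuting self-adjoint representatives, and that is a weak-stability (Lin-type) statement that is not available in arbitrary \cstar-algebras and that you neither state nor prove. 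What is actually needed is the Blackadar--Handelman estimate \cite[Corollary II.2.6]{blackhand}: for every $\epsilon>0$ there is $\delta>0$ such that $\|xy-yx\|<\delta$ forces $|\tau(x+y)-\tau(x)-\tau(y)|<\epsilon$ --- and, crucially, you need $\delta$ to depend only on $\epsilon$ and not on the structure $(A_i,\tau_i)$, since otherwise the relevant set of indices need not belong to $\omega$. As the paper emphasizes, Blackadar and Handelman prove their corollary via an ultrapower construction over a single algebra, so this structure-independence is not immediate; one must rerun their argument with an ultra\emph{product} of dimension functions. That uniformity is precisely the nontrivial content of the proposition, and your proof silently assumes it. The same issue recurs for the companion $\tau_2$ on $M_2(A)\cong\prod_\omega M_2(A_i)$ when you verify clause (4).

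The remainder of your argument is sound: the reduction of universal axiomatizability to the two closure properties, the substructure case (where clause (4) is handled correctly by restricting the witness to $M_2(B)$), and the use of the finite-dimensionality of $M_2$ to commute $M_2(-)$ with ultraproducts. If you insert the uniform version of \cite[Corollary II.2.6]{blackhand} (with a remark on why the Blackadar--Handelman proof yields structure-independent $\delta$), your route becomes a complete alternative to the paper's syntactic one; the trade-off is that the paper's explicit $\sup$--$\min$ axiom makes the dependence on that same uniform estimate visible rather than hidden inside an ultraproduct computation.
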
 

\begin{proof}
Items (1) and (2) in the definition of quasitrace are easily seen to be universally axiomatizable.  In order to axiomatize item (3), we use an ``approximate-near'' version of (3).  Indeed, it is shown in \cite[Corollary II.2.6]{blackhand} that for any quasitrace $\tau$ on a \cstar-algebra $A$ and for any $\epsilon>0$, there is a $\delta>0$ such that for all self-adjoint $a,b\in A$ with $\|ab-ba\|<\delta$, one has $|\tau(a+b)-\tau(a)-\tau(b)|<\epsilon$.  This result is proven by using an ultrapower construction for dimension functions.  However, the same arguments apply to yield an ultra\emph{product} construction for dimension functions, whence the proof of \cite[Corollary II.2.6]{blackhand} can be adapted to show that the choice of $\delta$ depends only on $\epsilon$ and not on the structure $(A,\tau)$.  Consequently, axiom (3) in the definition of quasitrace can be expressed by
$$\sup_{x,y\in A_{\operatorname{sa}}}\min\left(\delta\dotminus \|xy-yx\|,|\tau(x+y)-\tau(x)-\tau(y)|\dotminus \epsilon\right)=0,$$ where $r\dotminus s:=\max(r-s,0)$.  

Finally, item (4) of the definition of quasitrace can be axiomatized using the quantifier-free definability of $M_2(A)$ in $A^{\operatorname{eq}}$; see \cite[Lemma 4.2.3]{munster}.
\end{proof}

\begin{question}
Does the class $\k_\tau$ have the joint embedding property?
\end{question}

Once again, if the previous question has a positive answer, then there is a shorter, model-theoretic proof of Corollary \ref{quasitracesaretraces}.  Indeed, the assumption of good quasitracial stably finite nuclear witnesses allows us to find $(A,\tau)\in \k_\tau$ that is existentially closed for $\k_\tau$ and with $A$ nuclear.  By the main result of \cite{traces}, $\tau$ is a trace.  Since every element of $\k_\tau$ embeds into the ultrapower of $A$ in a quasitrace-preserving way, the desired result follows.

\end{document}